\definecolor{mygreen}{RGB}{55, 184, 2}
\newtheorem{theorem}{Theorem}[section]
\newtheorem{corollary}[theorem]{Corollary}
\newtheorem{lemma}[theorem]{Lemma}
\newcommand{\gcg}{\gamma_{\rm cg}}
\newcommand{\gc}{\gamma_{\rm c}}
\newcommand{\F}{\mathcal{F}}
\newcommand{\fast}{{\sc Fast}}
\newcommand{\slow}{{\sc Slow}}
\begin{document}

\title{Complexity of the game connected domination problem}

\author{Vesna Ir\v si\v c Chenoweth\\ \texttt{vesna.irsic@fmf.uni-lj.si}}
\maketitle

\begin{center}
Faculty of Mathematics and Physics, University of Ljubljana, Slovenia\\
\medskip

Institute of Mathematics, Physics and Mechanics, Ljubljana, Slovenia\\
\medskip
\end{center}

\begin{abstract}
The connected domination game is a variant of the domination game where the played vertices must form a connected subgraph at all stages of the game. In this paper we prove that deciding whether the game connected domination number is smaller than a given integer is PSPACE-complete using log-space reductions for both Dominator- and Staller-start connected domination game.
\end{abstract}

\noindent
{\bf Keywords:} connected domination game, complexity, PSPACE-complete

\noindent
{\bf AMS Subj.\ Class.\ (2020):} 05C57, 05C69, 68Q17

\section{Introduction}
\label{sec:intro}

The \emph{connected domination game} was introduced in 2019 by Borowiecki, Fiedorowicz, and Sidorowicz~\cite{borowiecki+2019connected}, and has been studied afterwards in \cite{bujtas+2019connected, bujtas+2021connected, irsic2019+connected}. The game is played on a graph $G$ by two players: Dominator and Staller. They alternately select vertices of the graph. A move is legal if the selected vertex dominates at least one vertex which is not already dominated by previously played vertices, and if the set of vertices selected so far induces a connected subgraph of $G$. More precisely, choosing the vertex $v_i$ in the $i$th move is legal if for the vertices $v_1,\ldots,v_{i-1}$ chosen so far we have $N[v_i] \setminus \bigcup_{j=1}^{i-1}N[v_j]\not=\emptyset$, and the vertices $v_1, \ldots, v_i$ induce a connected subgraph of $G$.
Thus we only consider the connected domination game played on connected graphs. The game ends when there are no legal moves, so when the set of played vertices is a connected dominating set of $G$. The goal of Dominator is to finish the game with minimum number of moves, while the aim of Staller is to maximize the number of moves.

If both players play optimally, the number of moves on a graph $G$ is the \emph{game connected domination number} $\gcg(G)$ if Dominator starts the game on $G$ (D-game).  If Staller starts the game (S-game), the number of moves is the \emph{Staller-start game connected domination number} $\gcg'(G)$. Note that the terminology ``connected game domination number'' is also used in the literature, but we try to make it consistent with the terminology of other domination games (as in~\cite{bresar+2021book}). The moves in the D-game are denoted by $d_1, s_1, d_2, s_2, \ldots$, where $d_i$ are Dominator's moves and $s_i$ are Staller's moves, while the moves in the S-game are denoted by $s_1', d_1', \ldots$. For brevity, we sometimes refer to Dominator as he/him, and to Staller as she/her.

The relation between the Dominator- and Staller-start game connected domination number has been investigated in~\cite{borowiecki+2019connected, irsic2019+connected} and differs drastically from the analogous result for the (total) domination game. If $G$ is a connected graph, then $$\gcg(G) - 1 \leq \gcg'(G) \leq 2 \gcg(G)$$ and the bounds are tight. Recall that on the other hand, the difference between the Dominator- and Staller-start game (total) domination number is at most 1~\cite{bresar+2010original, henning+2015total, kinnersley+2013extremal}. 

The connected domination game is a variant of the classical domination game which was introduced by Brešar, Klavžar and Rall in~\cite{bresar+2010original}. For a survey of existing results (and the variants of the game) we refer the reader to~\cite{bresar+2021book}. The famous $\frac35$- and $\frac34$-conjectures have recently been resolved~\cite{portier-2022-3/4, versteegen-2023-3/5}, and progress towards solving the $\frac12$-conjecture has been made~\cite{portier-2024-1/2}. 
The domination game is known to be difficult from the algorithmic point of view; the decision version of the game is PSPACE-complete using log-space reductions~\cite{bresar+2016PSPACE2, klavzar+2015PSPACE1}. The game total domination problem is also PSPACE-complete using log-space reductions~\cite{bresar+2017pspace}. The connected variant of the coloring game has also been studied~\cite{charpentier+2020-con-col, lima+2023-con-col, bradshaw2023-con-col}.

A useful tool for proving bounds for the domination games is the so-called imagination strategy and we will use it several times in this paper. It was first used in~\cite{bresar+2010original}; see also~\cite[Section 2.2]{bresar+2021book}. The general idea of the strategy is that while a domination game is played on a graph, one of the players imagines another game is played in parallel on a modified graph. The player imagining the game plays optimally in the imagined game and copies their moves to the real game, while they also copy the moves of the other player from the real to the imagined game. Care needs to be taken to make this copied moves legal. In our case, we will also use the POS-CNF game as the imagined game, thus we will need to describe even more precisely how the moves are copied between the games.

In this paper we prove that the connected domination game is of the same complexity. In Section \ref{sec:prelim}, we recall known results that are needed in the rest of the paper. The proof that the game connected domination problem is log-complete in PSPACE is given in Section~\ref{sec:D-game}, while an analogous result for the Staller-start game is presented in Section~\ref{sec:S-game}.

\section{Preliminaries}
\label{sec:prelim}

For an integer $n$, let $[n] = \{1,\ldots,n\}$.
Let $G$ be a graph. A set $S \subseteq V(G)$ is a \emph{connected dominating set} of $G$ if every vertex in $V(G) \setminus S$ has a neighbor in $S$ and the subgraph of $G$ induced on $S$ is connected. The smallest size of a connected dominating set is the \emph{connected domination number} $\gc(G)$ of $G$. Each connected dominating set of $G$ of size $\gc(G)$ is called a \emph{$\gc$-set} of $G$. We also recall the following result.

\begin{theorem}[{\cite[Theorem 1]{borowiecki+2019connected}}]
	\label{thm:basic}
	If $G$ is a connected graph, then $\gc(G) \leq \gcg(G) \leq 2 \gc(G) - 1$.
\end{theorem}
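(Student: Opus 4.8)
The plan is to prove the two inequalities separately. The left-hand inequality $\gc(G)\le\gcg(G)$ comes for free: a move is legal only when the chosen vertex dominates a previously undominated vertex, so no vertex is ever played twice and the number of moves made equals the number of distinct vertices played; recall from the introduction that the game ends exactly when this set of played vertices is a connected dominating set of $G$, and such a set has at least $\gc(G)$ elements. This argument uses nothing about how the players play, so it in fact also gives $\gc(G)\le\gcg'(G)$.

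For the right-hand inequality I would exhibit a strategy for Dominator in the D-game. Fix a $\gc$-set $D$, so that $|D|=\gc(G)=:k$ and $G[D]$ is connected. Dominator's rule on each of his turns will be: play a vertex $w\in D$ that has not yet been played, that is adjacent to the current set $P$ of played vertices (so that $P\cup\{w\}$ stays connected), and that is legal, i.e.\ $N[w]\not\subseteq N[P]$; on his first move any vertex of $D$ works. Since every move Dominator makes is a fresh vertex of $D$, he makes at most $k$ moves in the entire game.

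The heart of the argument — and the step I expect to be the only delicate one — is the claim that such a vertex $w$ exists whenever the game is not over, i.e.\ whenever $P$ is not yet a connected dominating set. I would prove this by contradiction. Partition $D$ into $P_D=D\cap P$, the ``frontier'' $X$ of not-yet-played vertices of $D$ that are adjacent to $P$, and the rest $Y=D\setminus(P\cup X)$. If no suitable $w$ exists, then every vertex of $X$ is redundant, and the vertices of $P_D$ are trivially redundant, so $N[P]\cup\bigcup_{v\in Y}N[v]=N[D]=V(G)$. If $Y=\emptyset$ this means $P$ dominates $G$, contradicting that the game is not over; if $Y\neq\emptyset$, then, using that $G[D]$ is connected and that $P_D\neq\emptyset$ (true from Dominator's first move on), a shortest $P_D$--$Y$ path inside $G[D]$ has its final edge from some $x\in X$ to some $y\in Y$, so $y\in N[x]\subseteq N[P]$, contradicting $y\in Y$. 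This is precisely the point where connectivity of $D$ gets used.

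With the claim established, I would finish by counting moves. If the game ends on Dominator's $m$-th move, the total number of moves is $2m-1\le 2k-1$. If it ends on Staller's $m$-th move, the total is $2m$; but then $m\le k-1$, because $m=k$ would mean Dominator had already played all of $D$, making the played set a connected dominating set and ending the game one move earlier — so the total is at most $2k-2\le 2k-1$. As this holds regardless of Staller's play, $\gcg(G)\le 2\gc(G)-1$.
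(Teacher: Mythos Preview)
The paper does not actually prove this theorem; it is quoted from \cite{borowiecki+2019connected} in the preliminaries and left without proof. So there is no ``paper's own proof'' to compare against.

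That said, your argument is correct and is the natural one. The lower bound is immediate. For the upper bound your Dominator strategy of staying inside a fixed $\gc$-set $D$ is the standard approach for inequalities of this shape in domination games, and your verification that a legal move in $D$ is always available is clean: the case split on $Y=\emptyset$ versus $Y\neq\emptyset$, with the shortest $P_D$--$Y$ path in $G[D]$ forcing its penultimate vertex into $X$, is exactly where the connectedness of $G[D]$ enters. The final move count, including the observation that the game cannot end on Staller's $k$-th move (since after Dominator's $k$-th move $D\subseteq P$ and $P$ is already a connected dominating set), is also correct.
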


A decision problem is \emph{PSPACE-complete} if it can be solved using working space of polynomial size with respect to the input length, and every other problem solvable in polynomial space can be reduced to it in polynomial time. If a problem is PSPACE-complete using log-space reductions, we say that it is \emph{log-complete in PSPACE}.

We consider the following decision problems.

\begin{center}
	\begin{tabular}{l}
		\textsc{Game connected domination problem}\\
		Input: A graph $G$ and an integer $m$.\\ \vspace{0.5cm}
		Question: Is $\gcg(G) \leq m$?\\
		
		\textsc{Staller-start game connected domination problem}\\
		Input: A graph $G$ and an integer $m$.\\ \vspace{0.5cm}
		Question: Is $\gcg'(G) \leq m$?\\
		
		\textsc{POS-CNF problem}\\
		Input: A positive CNF formula $\F$ with $k$ variables and $n$ clauses.\\
		Question: Does Player 1 win on $\F$?
	\end{tabular}
\end{center}

The \textsc{POS-CNF problem} is known to be log-complete in PSPACE~\cite{pos-cnf}. In the POS-CNF game, we are given a formula $\F$ with $k$ variables that is a conjunction of $n$ disjunctive clauses in which only positive variables appear. Two players alternate turns, Player 1 setting a previously unset variable TRUE, and Player 2 setting one FALSE. When all $k$ variables are set, Player 1 wins if the formula $\F$ is TRUE, otherwise Player 2 wins.

Recall the properties of the following graph from \cite{bujtas+2021connected, irsic2019+connected}. 
Let $H_n$, $n \geq 2$, be a graph with vertices $V(G_n) = \{ u_0, \ldots, u_{n+1}\} \cup \{ x_1, \ldots, x_{n-1}\} \cup \{y_1,  \ldots, y_{n-1} \}$ and edges $u_i u_{i+1}$ for $i \in \{0, \ldots, n\}$, $u_i x_i$, $x_i y_i$, $y_i u_{i+1}$, and $u_{i+1}  x_i$ for $i \in [n-1]$. For example, see Figure~\ref{fig:primer-H6}.

\begin{figure}[!ht]
	\begin{center}
		\begin{tikzpicture}[thick, scale=1.2pt]
			\tikzstyle{every node}=[circle, draw, fill=black!10, inner sep=0pt, minimum width=4pt]

			\node[label=below: {$u_0$}] (u0) at (0,0) {};
			\node[label=135: {$u_1$}] (u1) at (1,0) {};
			\node[label=-135: {$u_2$}] (u2) at (2,0) {};
			\node[label=135: {$u_3$}] (u3) at (3,0) {};
			\node[label=-135: {$u_4$}] (u4) at (4,0) {};
			\node[label=135: {$u_5$}] (u5) at (5,0) {};
			\node[label=below: {$u_6$}] (u6) at (6,0) {};
			\node[label=below: {$u_7$}] (u7) at (7,0) {};
			
			\node[label=above: {$x_1$}] (x1) at (1,1) {};
			\node[label=below: {$x_2$}] (x2) at (2,-1) {};
			\node[label=above: {$x_3$}] (x3) at (3,1) {};
			\node[label=below: {$x_4$}] (x4) at (4,-1) {};
			\node[label=above: {$x_5$}] (x5) at (5,1) {};
			
			\node[label=above: {$y_1$}] (y1) at (2,1) {};
			\node[label=below: {$y_2$}] (y2) at (3,-1) {};
			\node[label=above: {$y_3$}] (y3) at (4,1) {};
			\node[label=below: {$y_4$}] (y4) at (5,-1) {};
			\node[label=above: {$y_5$}] (y5) at (6,1) {};

			\draw (u0) -- (u1) -- (u2) -- (u3) -- (u4) -- (u5) -- (u6);
			
			\draw (u1) -- (x1) -- (y1);
			\path (u2) edge (x1);
			\path (u2) edge (y1);
			
			\draw (u2) -- (x2) -- (y2);
			\path (u3) edge (x2);
			\path (u3) edge (y2);
			
			\draw (u3) -- (x3) -- (y3);
			\path (u4) edge (x3);
			\path (u4) edge (y3);
			
			\draw (u4) -- (x4) -- (y4);
			\path (u5) edge (x4);
			\path (u5) edge (y4);
			
			\draw (u5) -- (x5) -- (y5);
			\path (u6) edge (x5);
			\path (u6) edge (y5);
			
			\path (u6) edge (u7);
			
		\end{tikzpicture}
		\caption{The graph $H_6$.}
		\label{fig:primer-H6}
	\end{center}
\end{figure}
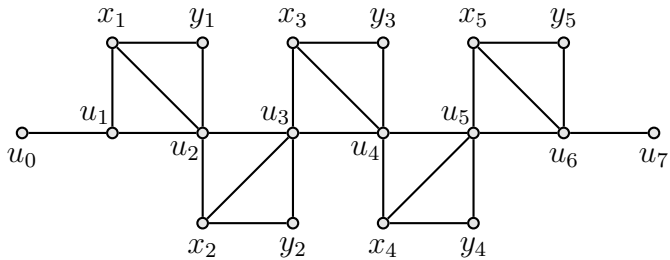

\begin{lemma}[{\cite{bujtas+2021connected}}]
	\label{lem:Hn}
	If $n \geq 2$, then $\gcg(H_n) = n$ and $\gcg'(H_n) = 2n$.
\end{lemma}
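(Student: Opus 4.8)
The plan is to first understand the connected dominating sets of $H_n$, and then treat the Dominator-start and Staller-start games separately.

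\medskip
\noindent\emph{Structure of connected dominating sets.} First I would record that $\{u_1,\dots,u_n\}$ induces a path in $H_n$ and dominates every vertex ($u_0$ via $u_1$, $u_{n+1}$ via $u_n$, each $x_i$ via $u_i$, each $y_i$ via $u_{i+1}$), so it is a connected dominating set and $\gc(H_n)\le n$. For the reverse inequality I would show that every connected dominating set $S$ of $H_n$ contains all of $u_1,\dots,u_n$: no single vertex dominates both leaves $u_0$ and $u_{n+1}$, so $|S|\ge 2$; since $u_0$ has unique neighbour $u_1$, connectedness (if $u_0\in S$) or domination (if $u_0\notin S$) forces $u_1\in S$, and symmetrically $u_n\in S$; and for $2\le j\le n-1$ the vertex $u_j$ is a cut vertex of $H_n$ whose removal separates $\{u_0,\dots,u_{j-1}\}$ together with the gadgets on indices $1,\dots,j-1$ from $\{u_{j+1},\dots,u_{n+1}\}$ together with the gadgets on indices $j,\dots,n-1$, so the connected set $S$, containing both $u_1$ and $u_n$, must contain $u_j$. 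Hence $\gc(H_n)=n$, and since the set of moves played in any connected domination game is a connected dominating set, $\gcg(H_n)\ge n$ (this also follows from Theorem~\ref{thm:basic}).

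\medskip
\noindent\emph{The D-game.} For $\gcg(H_n)\le n$ I would let Dominator play $u_n$ on his first move. Because $N[u_n]=\{u_{n-1},u_n,u_{n+1},x_{n-1},y_{n-1}\}$, after this move all undominated vertices lie "to the left", and I would prove the invariant: \emph{whenever the played set equals $\{u_a,u_{a+1},\dots,u_n\}$ for some $2\le a\le n$, the only legal move is $u_{a-1}$.} This is a routine check: each played $u_j$ dominates $x_{j-1},y_{j-1},x_j$ (those that exist), so with this played set every $x_i$ and $y_i$ with $i\ge a-1$ has its whole closed neighbourhood dominated and is illegal to play, $u_{n+1}$ is dominated, while $u_{a-1}$ does dominate a new vertex ($u_{a-2}$, or $u_0$ when $a=2$). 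Starting from $\{u_n\}$ the play is therefore forced to be $u_n,u_{n-1},\dots,u_1$, which takes exactly $n$ moves and ends with every vertex dominated. Together with the lower bound, $\gcg(H_n)=n$.

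\medskip
\noindent\emph{The S-game.} The bound $\gcg'(H_n)\le 2\gcg(H_n)=2n$ is immediate from the general inequality $\gcg'(G)\le 2\gcg(G)$ recalled in the introduction. For $\gcg'(H_n)\ge 2n$ I would equip Staller with the strategy: play $u_0$ on her first move, and on every later turn play $x_k$, where $u_k$ is the played $u$-vertex of largest index. Against this strategy I claim all of Dominator's moves are forced. After $u_0$, its unique neighbour $u_1$ is the only legal move. In general, when the played set is $\{u_0,\dots,u_k\}\cup\{x_1,\dots,x_k\}$, every $y_i$ with $i\le k$ is dominated (since $x_i$ is played), the vertices $x_{k+1},u_{k+2}$ are not yet adjacent to the played set, and the unique vertex that is both adjacent to the played set and dominates something new is $u_{k+1}$ (it dominates $u_{k+2}$, or $u_{n+1}$ when $k+1=n$); so Dominator must play $u_{k+1}$. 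After Dominator plays $u_{k+1}$ with $k+1\le n-1$ the vertex $y_{k+1}$ is still undominated, so $x_{k+1}$ is a legal move and Staller plays it, repeating the pattern; and once Dominator plays $u_n$ every vertex is dominated and the game stops. Thus the play is forced to be $u_0,u_1,x_1,u_2,x_2,\dots,u_{n-1},x_{n-1},u_n$, which is $2n$ moves, whence $\gcg'(H_n)\ge 2n$ and so $\gcg'(H_n)=2n$.

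\medskip
\noindent The conceptual content is short; the real work — and the part most prone to error — is the bookkeeping in the two "forced play" arguments, namely tracking exactly which of the $u_i,x_i,y_i$ are dominated at each stage and checking the legality of every candidate move. One should take particular care with the leaves $u_0,u_{n+1}$, the boundary gadgets (indices $1$ and $n-1$), the non-existence of $x_0$ and $x_n$, and the base case $n=2$ (a single gadget), which I would verify by hand.
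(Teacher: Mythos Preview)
Your proof is correct and follows essentially the same approach that the paper (citing \cite{bujtas+2021connected}) sketches: Dominator's \textsc{Fast} strategy of opening with $u_n$ to force the unique run $u_n,\dots,u_1$, and Staller's \textsc{Slow} strategy of opening with $u_0$ and playing the $x_i$'s to force $2n$ moves, combined with $\gc(H_n)=n$ and $\gcg'\le 2\gcg$ for the matching bounds. Your additional cut-vertex argument for $\{u_1,\dots,u_n\}\subseteq S$ and your explicit invariant checks are a clean way to make the ``all remaining moves are forced'' claims rigorous.
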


Dominator's strategy for the D-game on $H_n$ is to play $d_1 = u_n$ which makes all the remaining moves unique and the game finishes in $n$ moves. As in~\cite{bujtas+2021connected}, we call this strategy \fast. Note that exactly vertices $u_n, \ldots, u_1$ are played during the game. 

Staller's strategy in the S-game is to start on $u_0$ and to play vertices $x_1, \ldots, x_{n-1}$ whenever she can. She is able to force $n-1$ additional moves on $V(H_n) \setminus \{ u_0, \ldots, u_{n+1} \}$. Thus, apart from the vertices $u_1, \ldots, u_n$, exactly $n$ additional moves are played (counting the move $u_0$ as well). We call this strategy of Staller \slow. 

Observe that since $\gamma_{\rm c}(H_n) = n$, there are always at least $n$ moves played on $H_n$ (even if players do not alternate taking moves). It also follows from the above that $n$ moves are played on $H_n$ (again, even if players do not alternate taking moves) if and only if exactly vertices $u_1, \ldots, u_n$ are played.

\section{Complexity of the Dominator-start game}
\label{sec:D-game}

In this section we provide a reduction from the \textsc{POS-CNF problem} to the \textsc{Game connected domination problem}. The construction needed is described in Section~\ref{sec:construction}, while the properties of the reduction and the final results are presented in Section~\ref{sec:proof}. Note that the complexities of the domination and total domination games are also determined using a reduction from the \textsc{POS-CNF problem}, but the constructions used are different for each game; see~\cite{bresar+2016PSPACE2, bresar+2017pspace}.

\subsection{Construction}
\label{sec:construction}

Let the graph $B$ be as in Figure \ref{fig:gadget}. When we refer to a copy $B_i$ of the graph $B$, its vertices are labeled $w_i$ for every $w \in \{a,e,b,b',h,k, f^1,g^1,f^2,g^2\}$.

\begin{figure}[!ht]
	\begin{center}
		\begin{tikzpicture}[thick, scale = 1.5]
			\tikzstyle{every node}=[circle, draw, fill=black!10, inner sep=0pt, minimum width=4pt]
			
			\node[label=left: {$a$}] (a) at (0,0) {};
			\node[label=below left: {$e$}] (e) at (1,0) {};
			\node[label=below right: {$b$}] (b) at (2,0) {};
			\node[label=right: {$b'$}] (d) at (3,0) {};
			
			\node[label=above: {$h$}] (h) at (1,1) {};
			\node[label=above left: {$k$}] (k1) at (0.5,1) {};
			
			\node[label=above: {$f^1$}] (f1) at (2,1) {};
			\node[label=above: {$g^1$}] (g1) at (1.5,1) {};
			
			\node[label=below: {$f^2$}] (f2) at (2,-1) {};
			\node[label=below: {$g^2$}] (g2) at (1.5,-1) {};

			\draw (a) -- (e) -- (b) -- (d);
			\draw (a) -- (h) -- (e) -- (f1) -- (b) -- (f2) -- (e) -- (g1) -- (f1);
			\draw (e) -- (g2) -- (f2);
			\draw (h) -- (k1) -- (a);			
		\end{tikzpicture}
		\caption{The graph $B$.}
		\label{fig:gadget}
	\end{center}
\end{figure}
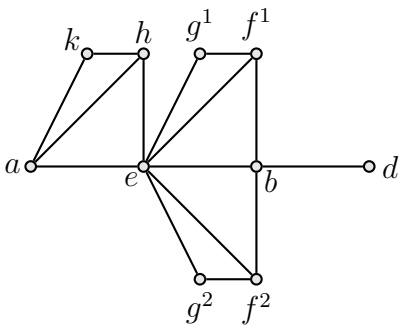

\begin{lemma}
	\label{lem:gadget}
	If $B$ is the graph defined above, then $\gc(B) = 3$. The same holds even if vertices $a$ and $b$ are predominated.
\end{lemma}

For $m \geq 1$, let the graph $C(m)$ have the vertex set $\{ c, c^1, \ldots, c^m, d, d^1, \ldots, d^m \}$ and edges $cd$, $c d^i$ for every $i \in [m]$ and $c^i d^i$ for every $i \in [m]$. For example, see Figure \ref{fig:gadget-C}. When we refer to a copy $C(m)_i$ of the graph $C(m)$, its vertices are labeled as $w_i$ for every $w \in V(C(m))$. When $m$ is clear from the context, we simply write $C$ instead of $C(m)$.

\begin{figure}[!ht]
	\begin{center}
		\begin{tikzpicture}[thick, scale = 1.5]
			\tikzstyle{every node}=[circle, draw, fill=black!10, inner sep=0pt, minimum width=4pt]
			
			\node[label=above: {$c$}] (c) at (0,0) {};
			\node[label=above: {$c^1$}] (c1) at (1,0) {};
			\node[label=above: {$c^2$}] (c2) at (2,0) {};
			\node[label=above: {$c^3$}] (c3) at (3,0) {};
			
			\node[label=below: {$d$}] (d) at (0,-1) {};
			\node[label=below: {$d^1$}] (d1) at (1,-1) {};
			\node[label=below: {$d^2$}] (d2) at (2,-1) {};
			\node[label=below: {$d^3$}] (d3) at (3,-1) {};
						
			\draw (c) -- (d);
			\draw (c1) -- (d1);
			\draw (c2) -- (d2);
			\draw (c3) -- (d3);
			
			\draw (c) -- (d1);	
			\draw (c) -- (d2);
			\draw (c) -- (d3);	
		\end{tikzpicture}
		\caption{The graph $C(3)$.}
		\label{fig:gadget-C}
	\end{center}
\end{figure}

Let $A$ be the graph in Figure \ref{fig:primer-A}. Observe that if the first move played on $A$ is $p_1$ then four moves are needed to end the game if Dominator starts and three moves if Staller starts.

\begin{figure}[!ht]
	\begin{center}
		\begin{tikzpicture}[thick, scale=1.2pt]
			\tikzstyle{every node}=[circle, draw, fill=black!10, inner sep=0pt, minimum width=4pt]
			
			\node[label=135: {$p_1$}] (u1) at (1,0) {};
			\node[label=-135: {$p_2$}] (u2) at (2,0) {};
			\node[label=135: {$p_3$}] (u3) at (3,0) {};
						
			\node[label=above: {$q_1$}] (x1) at (1,1) {};
			\node[label=below: {$q_2$}] (x2) at (2,-1) {};
			
			\node[label=above: {$r_1$}] (y1) at (2,1) {};
			\node[label=below: {$r_2$}] (y2) at (3,-1) {};
						
			\draw (u1) -- (u2) -- (u3);
			
			\draw (u1) -- (x1) -- (y1);
			\path (u2) edge (x1);
			\path (u2) edge (y1);
			
			\draw (u2) -- (x2) -- (y2);
			\path (u3) edge (x2);
			\path (u3) edge (y2);
			
		\end{tikzpicture}
		\caption{The graph $A$.}
		\label{fig:primer-A}
	\end{center}
\end{figure}

Given a formula $\F$ with $k$ variables and $n$ disjunctive clauses, we built a graph $G_{\F}$ in the following way. For each variable $X_i$, $i \in [k]$, we add to the graph a copy $B_i$ of the graph $B$ (called a gadget $B_i$). For each clause $C_j$, $j \in [n]$, we add to the graph a copy $C(n)_j = C_j$ of the graph $C(n)$ (called a gadget $C_j$) and make all vertices $\{c_j, c^1_j, \ldots, c^n_j\}$ adjacent to $a_i$ if and only if the variable $X_i$ appears in the clause $C_j$. We add to the graph a disjoint copy of the graph $H_{2n+7}$ and make $u_0$ adjacent to vertices $a_i$ and $b_i$ for all $i \in [k]$. If $k$ is odd, we also add to the graph a disjoint copy of the graph $A$ and make $p_1$ adjacent to $u_0$. For example, see Figure \ref{fig:GF}.

\begin{figure}[!ht]
	\begin{center}
		\begin{tikzpicture}[thick, scale = 0.77]
			\tikzstyle{vert}=[circle, draw, fill=black!10, inner sep=0pt, minimum width=4pt]
			
			\foreach \x in {1,2,3,4,5}{
			\node (k\x) at (4*\x - 4 +1.5,1.5) {$B_{\x}$};
			\node[vert, label=below: {$a_{\x}$}] (a\x) at (4*\x - 4 +0,0) {};
			\node[vert] (e\x) at (4*\x - 4+1,0) {};
			\node[vert, label=below right: {$b_{\x}$}] (b\x) at (4*\x - 4+2,0) {};
			\node[vert] (d\x) at (4*\x - 4+2.6,0.3) {};
			
			\node[vert] (h\x) at (4*\x - 4+1,1) {};
			\node[vert] (k1\x) at (4*\x - 4+0.5,1) {};
			
			\node[vert] (f1\x) at (4*\x - 4+2,1) {};
			\node[vert] (g1\x) at (4*\x - 4+1.5,1) {};
			
			\node[vert] (f2\x) at (4*\x - 4+2,-1) {};
			\node[vert] (g2\x) at (4*\x - 4+1.5,-1) {};

			\draw (a\x) -- (e\x) -- (b\x) -- (d\x);
			\draw (a\x) -- (h\x) -- (e\x) -- (f1\x) -- (b\x) -- (f2\x) -- (e\x) -- (g1\x) -- (f1\x);
			\draw (e\x) -- (g2\x) -- (f2\x);
			\draw (h\x) -- (k1\x) -- (a\x);	
			}
		
		\node (h) at (14,8.5) {$H_{13}$};
		\node[vert, label=5: {$u_0$}] (u0) at (9.5,5) {};
		\node[vert, label=180: {$u_1$}] (u1) at (9.5,7) {};
		\node[vert] (u2) at (10.5,7) {};
		\node[vert] (u3) at (11.5,7) {};
		\node[vert] (u4) at (12.5,7) {};
		\node[vert, label=90: {$u_5$}] (u5) at (13.5,7) {};
		\node (ddd) at (14.5,7) {$\cdots$};
		\node[vert, label=90: {$u_{11}$}] (u8) at (15.5,7) {};
		\node[vert] (u9) at (16.5,7) {};
		\node[vert, label=-90: {$u_{13}$}] (u10) at (17.5,7) {};
		\node[vert, label=-90: {$u_{14}$}] (u11) at (18.5,7) {};
		
		\node[vert] (x1) at (9.5,8) {};
		\node[vert] (x2) at (10.5,6) {};
		\node[vert] (x3) at (11.5,8) {};
		\node[vert] (x4) at (12.5,6) {};
		\node[vert] (x8) at (15.5,6) {};
		\node[vert] (x9) at (16.5,8) {};

		\node[vert] (y1) at (10.5,8) {};
		\node[vert] (y2) at (11.5,6) {};
		\node[vert] (y3) at (12.5,8) {};
		\node[vert] (y4) at (13.5,6) {};
		\node[vert] (y8) at (16.5,6) {};
		\node[vert] (y9) at (17.5,8) {};
		
		\draw (u5) -- (14,7);
		\draw (u8) -- (15,7);
		
		\draw (u0) -- (u1) -- (u2) -- (u3) -- (u4) -- (u5);
		\draw  (u8) -- (u9) -- (u10) -- (u11);
		
		\draw (u1) -- (x1) -- (y1);
		\path (u2) edge (x1);
		\path (u2) edge (y1);
		
		\draw (u2) -- (x2) -- (y2);
		\path (u3) edge (x2);
		\path (u3) edge (y2);
		
		\draw (u3) -- (x3) -- (y3);
		\path (u4) edge (x3);
		\path (u4) edge (y3);
		
		\draw (u4) -- (x4) -- (y4);
		\path (u5) edge (x4);
		\path (u5) edge (y4);
		
		\draw (u8) -- (x8) -- (y8);
		\path (u9) edge (x8);
		\path (u9) edge (y8);
		
		\draw (u9) -- (x9) -- (y9);
		\path (u10) edge (x9);
		\path (u10) edge (y9);
		
		\path (u10) edge (u11);
		
		\draw (u0) to[out=180,in=135,distance=3cm] (a1);
		\draw (u0) to[out=-160,in=60,distance=3cm] (b1);
		\draw (u0) to[out=-140,in=120,distance=3cm] (a2);
		\draw (u0) to[out=-120,in=60,distance=3cm] (b2);
		
		\draw (u0) to[out=-100,in=120,distance=2cm] (a3);
		\draw (u0) to[out=-80,in=60,distance=2cm] (b3);
		
		\draw (u0) to[out=-60,in=120,distance=3cm] (a4);
		\draw (u0) to[out=-40,in=60,distance=3cm] (b4);
		\draw (u0) to[out=-20,in=120,distance=3cm] (a5);
		\draw (u0) to[out=0,in=45,distance=3cm] (b5);
		
		\node (a) at (2.5,7) {$A$ as $k$ is odd};		
		\node[vert, label=45: {$p_1$}] (p1) at (6.5,7) {};
		\node[vert, label=-45: {$p_2$}] (p2) at (5.5,7) {};
		\node[vert, label=90: {$p_3$}] (p3) at (4.5,7) {};
		
		\node[vert, label=above: {$q_1$}] (q1) at (6.5,8) {};
		\node[vert, label=below: {$q_2$}] (q2) at (5.5,6) {};
		
		\node[vert, label=above: {$r_1$}] (r1) at (5.5,8) {};
		\node[vert, label=below: {$r_2$}] (r2) at (4.5,6) {};
		
		\draw (p1) -- (p2) -- (p3);
		
		\draw (p1) -- (q1) -- (r1);
		\path (p2) edge (q1);
		\path (p2) edge (r1);
		
		\draw (p2) -- (q2) -- (r2);
		\path (p3) edge (q2);
		\path (p3) edge (r2);
		
		\draw (u0) -- (p1);
		
		\node (cc1) at (2,-6.3) {$C_1$};
		\node[vert, label=above: {$c_1$}] (c1) at (0.5,-4) {};
		\node[vert, label=right: {$c^1_1$}] (c11) at (1.5,-4) {};
		\node[vert, label=right: {$c^2_1$}] (c21) at (2.5,-4) {};
		\node[vert, label=right: {$c^3_1$}] (c31) at (3.5,-4) {};
		
		\node[vert, label=below: {$d_1$}] (d1) at (0.5,-5) {};
		\node[vert, label=below: {$d^1_1$}] (d11) at (1.5,-5) {};
		\node[vert, label=below: {$d^2_1$}] (d21) at (2.5,-5) {};
		\node[vert, label=below: {$d^3_1$}] (d31) at (3.5,-5) {};
		
		\draw (c1) -- (d1);
		\draw (c11) -- (d11);
		\draw (c21) -- (d21);
		\draw (c31) -- (d31);
		
		\draw (c1) -- (d11);	
		\draw (c1) -- (d21);
		\draw (c1) -- (d31);
		
		\node (cc2) at (8.5,-6.3) {$C_2$};
		\node[vert, label=above: {$c_2$}] (c2) at (7,-4) {};
		\node[vert, label=right: {$c^1_2$}] (c12) at (8,-4) {};
		\node[vert, label=right: {$c^2_2$}] (c22) at (9,-4) {};
		\node[vert, label=right: {$c^3_2$}] (c32) at (10,-4) {};
		
		\node[vert, label=below: {$d_2$}] (d2) at (7,-5) {};
		\node[vert, label=below: {$d^1_2$}] (d12) at (8,-5) {};
		\node[vert, label=below: {$d^2_2$}] (d22) at (9,-5) {};
		\node[vert, label=below: {$d^3_2$}] (d32) at (10,-5) {};
		
		\draw (c2) -- (d2);
		\draw (c12) -- (d12);
		\draw (c22) -- (d22);
		\draw (c32) -- (d32);
		
		\draw (c2) -- (d12);	
		\draw (c2) -- (d22);
		\draw (c2) -- (d32);
		
		\node (cc3) at (15,-6.3) {$C_3$};
		\node[vert, label=above: {$c_3$}] (c3) at (13.5,-4) {};
		\node[vert, label=right: {$c^1_3$}] (c13) at (14.5,-4) {};
		\node[vert, label=right: {$c^2_3$}] (c23) at (15.5,-4) {};
		\node[vert, label=right: {$c^3_3$}] (c33) at (16.5,-4) {};
		
		\node[vert, label=below: {$d_3$}] (d3) at (13.5,-5) {};
		\node[vert, label=below: {$d^1_3$}] (d13) at (14.5,-5) {};
		\node[vert, label=below: {$d^2_3$}] (d23) at (15.5,-5) {};
		\node[vert, label=below: {$d^3_3$}] (d33) at (16.5,-5) {};
	
		\draw (c3) -- (d3);
		\draw (c13) -- (d13);
		\draw (c23) -- (d23);
		\draw (c33) -- (d33);
		
		\draw (c3) -- (d13);	
		\draw (c3) -- (d23);
		\draw (c3) -- (d33);
		
		\draw (c1) to[out=135,in=-135,distance=2cm] (a1);
		\draw (c11) to[out=135,in=-135,distance=2cm] (a1);
		\draw (c21) to[out=130,in=-135,distance=2cm] (a1);
		\draw (c31) to[out=130,in=-135,distance=2cm] (a1);
		
		\draw (c1) to[out=55,in=-135,distance=3cm] (a3);
		\draw (c11) to[out=60,in=-135,distance=3cm] (a3);
		\draw (c21) to[out=65,in=-135,distance=3cm] (a3);
		\draw (c31) to[out=70,in=-135,distance=3cm] (a3);
		
		\draw (c2) to[out=135,in=-145,distance=2.5cm] (a2);
		\draw (c12) to[out=135,in=-145,distance=2.5cm] (a2);
		\draw (c22) to[out=130,in=-145,distance=2.5cm] (a2);
		\draw (c32) to[out=130,in=-145,distance=2.5cm] (a2);
		
		\draw (c2) to[out=50,in=-45,distance=1.3cm] (a3);
		\draw (c12) to[out=60,in=-45,distance=1.3cm] (a3);
		\draw (c22) to[out=70,in=-45,distance=1.3cm] (a3);
		\draw (c32) to[out=80,in=-45,distance=1.3cm] (a3);
		
		\draw (c2) to[out=45,in=-135,distance=4cm] (a5);
		\draw (c12) to[out=45,in=-135,distance=4cm] (a5);
		\draw (c22) to[out=45,in=-135,distance=4cm] (a5);
		\draw (c32) to[out=45,in=-135,distance=4cm] (a5);
		
		\draw (c3) to[out=135,in=-135,distance=2.5cm] (a4);
		\draw (c13) to[out=135,in=-135,distance=2.5cm] (a4);
		\draw (c23) to[out=130,in=-135,distance=2.5cm] (a4);
		\draw (c33) to[out=130,in=-135,distance=2.5cm] (a4);
		
		\draw (c3) to[out=45,in=-45,distance=2cm] (a5);
		\draw (c13) to[out=55,in=-45,distance=2cm] (a5);
		\draw (c23) to[out=65,in=-45,distance=2cm] (a5);
		\draw (c33) to[out=75,in=-45,distance=2cm] (a5);
		\end{tikzpicture}
		\caption{The graph $G_\F$ obtained from the formula $\F = (X_1 \vee X_3) \wedge (X_2 \vee X_3 \vee X_5) \wedge (X_4 \vee X_5)$.}
		\label{fig:GF}
	\end{center}
\end{figure}

\begin{lemma}
	\label{lem:gc-min}
	If $\F$ is a forumla with $k$ variables and $n$ disjunctive clauses, and $G_\F$ is the graph defined above, then $$\gc(G_\F) = \begin{cases}
		3k + 3n + 8; & k \text{ even},\\
		3k + 3n + 11; & k \text{ odd},
	\end{cases}$$ and every $\gc$-set of $G_{\F}$ contains vertices $\{u_0, u_1, \ldots, u_{2n+7}\}$, vertices $\{c_1, \ldots, c_n\}$, exactly three vertices from each gadget $B_i$, $i \in [k]$, and if $k$ is odd also exactly three vertices from $A$.
\end{lemma}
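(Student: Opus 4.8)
The plan is to exploit the partition of $V(G_\F)$ into $V(H_{2n+7})$, the sets $V(B_i)$ for $i\in[k]$, the sets $V(C_j)$ for $j\in[n]$, and, when $k$ is odd, $V(A)$; the only edges joining distinct parts leave $u_0$ (to the vertices $a_i$, $b_i$, and $p_1$) or leave some $a_i$ (to the $c$-vertices of a clause gadget). For an arbitrary connected dominating set $S$ of $G_\F$ I would bound $|S\cap P|$ from below for each part $P$ of the partition, and I would exhibit one connected dominating set attaining equality in every bound; the value of $\gc(G_\F)$ and the structural statement then follow at once.

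For the upper bound I would take
\[
S_0=\{u_0,u_1,\dots,u_{2n+7}\}\cup\{c_1,\dots,c_n\}\cup\bigcup_{i\in[k]}\{a_i,e_i,b_i\},
\]
adjoining $\{p_1,p_2,p_3\}$ when $k$ is odd. A direct check shows that $\{a_i,e_i,b_i\}$ induces a path and is a connected dominating set of $B_i\cong B$ (cf.\ Lemma~\ref{lem:gadget}), and similarly $\{p_1,p_2,p_3\}$ for $A$; that $\{u_1,\dots,u_{2n+7}\}$ dominates $H_{2n+7}$; and that $c_j$ dominates $d_j$ and all of $d_j^1,\dots,d_j^n$, while every $c_j^\ell$ is dominated by any $a_i$ with $X_i\in C_j$ (such an $i$ exists because clauses are non-empty, and $a_i\in S_0$). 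Connectivity of $S_0$ is clear: the path $u_0u_1\cdots u_{2n+7}$ is a backbone, each triple $\{a_i,e_i,b_i\}$ attaches to it at $u_0$ via $a_i$, each $c_j$ attaches to some $a_i\in S_0$, and $\{p_1,p_2,p_3\}$ attaches at $u_0$ via $p_1$. Hence $|S_0|=(2n+8)+n+3k$, plus $3$ when $k$ is odd, which is the claimed value.

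For the lower bound, fix a connected dominating set $S$ of $G_\F$. Since $u_{2n+8}$ and $b_1'$ are leaves of $G_\F$ lying in distinct components of $G_\F-u_0$, and $u_0$ is the only vertex adjacent to both sides, $S$ must meet each of $V(H_{2n+7})\setminus\{u_0\}$ and $V(G_\F)\setminus V(H_{2n+7})$, so connectivity of $S$ forces $u_0\in S$. Every vertex of $V(H_{2n+7})\setminus\{u_0\}$ has all its neighbours inside $V(H_{2n+7})$, so $S\cap V(H_{2n+7})$ dominates $H_{2n+7}$; it is connected because $u_0$ is a cut vertex of $G_\F$ and $u_0\in S$; thus $S\cap V(H_{2n+7})$ is a connected dominating set of $H_{2n+7}$ containing the leaf $u_0$, whence $u_1\in S$ and $(S\cap V(H_{2n+7}))\setminus\{u_0\}$ is still a connected dominating set of $H_{2n+7}$. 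Recalling that $\{u_1,\dots,u_{2n+7}\}$ is the unique minimum connected dominating set of $H_{2n+7}$, this gives $|S\cap V(H_{2n+7})|\ge 2n+8$, with equality only if $S\cap V(H_{2n+7})=\{u_0,\dots,u_{2n+7}\}$. In a clause gadget $C_j$, the vertex $d_j$ has only neighbour $c_j$, and each $d_j^\ell$ has only neighbours $c_j,c_j^\ell$, all inside $V(C_j)$; hence $S\cap V(C_j)\neq\emptyset$, and if $|S\cap V(C_j)|=1$ then its single vertex must dominate $d_j$ and every $d_j^\ell$, which forces it to be $c_j$. In a gadget $B_i$, the vertices $b_i'$, $k_i$, $g_i^1$ have all their neighbours in $V(B_i)$, and their closed neighbourhoods $N[b_i']=\{b_i',b_i\}$, $N[k_i]=\{k_i,h_i,a_i\}$, $N[g_i^1]=\{g_i^1,e_i,f_i^1\}$ are pairwise disjoint, so $|S\cap V(B_i)|\ge 3$. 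When $k$ is odd, the same cut-vertex argument applied to $p_1$ shows $S\cap V(A)$ is a connected dominating set of $A$ containing $p_1$, and a short check shows that no such set has fewer than $3$ vertices (neither of the two edges at $p_1$ dominates all of $A$), so $|S\cap V(A)|\ge 3$. Summing over the parts of the partition yields $|S|\ge 3k+3n+8$, and $|S|\ge 3k+3n+11$ when $k$ is odd; if $S$ is a $\gc$-set all of these bounds are tight, which gives all the asserted membership statements.

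The step I expect to need the most care is the connectivity bookkeeping rather than the counting: one must verify that the restriction of a connected dominating set to each part of the partition is again connected (this is precisely where the cut-vertex structure at $u_0$ and at $p_1$ enters), and one must be careful to invoke the right property of $H_n$, namely that $\{u_1,\dots,u_n\}$ is its \emph{unique} minimum connected dominating set (a purely graph-theoretic fact), rather than the statement about the game recorded in Lemma~\ref{lem:Hn}. A secondary subtlety is the clause gadget: the crucial point is that the $d$-vertices have no neighbours outside their own gadget, so that a single vertex of $S$ in a clause gadget is forced to be $c_j$.
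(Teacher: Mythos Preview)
Your proof is correct and follows essentially the same route as the paper's: exhibit the explicit set $S_0$ for the upper bound, then partition $V(G_\F)$ into $V(H_{2n+7})$, the $V(B_i)$, the $V(C_j)$, and (when $k$ is odd) $V(A)$, and bound $|S\cap P|$ from below on each part. The only differences are in presentation: you give a self-contained argument for $|S\cap V(B_i)|\ge 3$ via the three pairwise disjoint closed neighbourhoods $N[b_i']$, $N[k_i]$, $N[g_i^1]$, whereas the paper simply invokes Lemma~\ref{lem:gadget}; and you spell out the cut-vertex bookkeeping (forcing $u_0\in S$ and $p_1\in S$, and showing that $S\cap V(H_{2n+7})$ and $S\cap V(A)$ are themselves connected dominating sets of their respective subgraphs) that the paper leaves implicit.
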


\begin{proof}
	Let $D = \{u_0, u_1, \ldots, u_{2n+7}\} \cup \bigcup_{i\in [k]} \{a_i, e_i, b_i\} \cup \bigcup_{j \in [n]} \{c_j\}$. Clearly, $|D| = 3k+3n+8$ and if $k$ is even, then $D$ is a connected dominating set of $G_{\F}$. If $k$ is odd, $D \cup \{p_1, p_2, p_3\}$ is of order $3k+3n+11$ and is a connected dominating set of $G_{\F}$. Thus the desired upper bound for $\gc(G_\F)$ follows.
	
	By Lemma \ref{lem:gadget}, three vertices are needed to dominate each gadget $B_i$ (even if $a_i$ and $b_i$ are already dominated). At least one vertex from each $C_j$, $j \in [n]$, is needed to dominate $C_j$, and $c_j$ is the unique such vertex. By Lemma \ref{lem:Hn}, $2n+7$ vertices are needed to dominate $H_{2n+7}$ and the unique $\gc$-set of $H_{2n+7}$ of size $2n+7$ is $\{u_1, \ldots, u_{2n+7}\}$. But since the connected dominating set of $G_\F$ must be connected, $u_0$ must be included in any connected dominating set of $G_\F$. If $k$ is odd, at least three vertices are needed to dominate $A$. Thus the desired lower bound for $\gc(G_\F)$ follows, as does the description of a minimum connected dominating set.
\end{proof}

The following simple observations about the connected domination game on $G_{\F}$ that easily follow from Lemma~\ref{lem:gc-min} will also be useful:
\begin{itemize}
	\item At least $2n+8$ moves must be played in the copy of $H_{2n+7}$ in $G_\F$, including the vertex $u_0$.
	\item At least three moves must be played in each gadget $B_i$.
	\item If Staller is the first to play on $C_j$, then at least two moves are played on $C_j$.
\end{itemize}

\subsection{Proof of the PSPACE-completeness}
\label{sec:proof}

We consider two auxiliary lemmas that are needed for the main result.

\begin{lemma}
	\label{lem:p1-wins}
	If Player 1 has a winning strategy for the POS-CNF game played on $\F$, then $$\gcg(G_\F) \leq \begin{cases}
		3k + 4n + 8; & k \text{ even},\\
		3k + 4n + 11; & k \text{ odd}.
	\end{cases}$$
\end{lemma}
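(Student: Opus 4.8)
The plan is to exhibit an explicit strategy for Dominator in the D-game on $G_\F$ that guarantees the game finishes within the claimed bound, and to do so via the imagination-strategy paradigm described in the introduction: Dominator imagines the POS-CNF game is being played on $\F$ in parallel, plays his winning strategy there (which exists by hypothesis), and translates those moves into moves on the gadgets $B_i$ of $G_\F$. The target counts are $3k+3n+8$ (resp.\ $+11$) from the minimum connected dominating set of Lemma~\ref{lem:gc-min}, plus $n$ extra moves — one per clause gadget $C_j$. So the heart of the argument is: Dominator forces the $H$-part to cost exactly $2n+8$ (by playing \fast\ there, i.e.\ $d_1 = u_{2n+7}$ so that only $u_0,\dots,u_{2n+7}$ get played), forces each $B_i$ to cost exactly $3$, forces the $A$-gadget (when $k$ odd) to cost exactly $3$ (by making sure $p_1$ is played first there and then cleaning up in $3$ total, not $4$), and allows at most $n+$ (something) moves total on the clause gadgets — with the POS-CNF win ensuring each satisfied clause gadget $C_j$ needs only the single move $c_j$.

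First I would set up the move-translation dictionary. In gadget $B_i$, the three ``cheap'' vertices $a_i, e_i, b_i$ form the $\gamma_c$-set; playing $a_i$ corresponds to ``set variable $X_i$ TRUE'' and playing $b_i$ corresponds to ``set $X_i$ FALSE'' — the point of the gadget $B$ (Lemma~\ref{lem:gadget}) being that even with $a_i$ and $b_i$ predominated one still needs $3$ moves, and the choice of which of $a_i/b_i$ is played first is forced by whoever moves in that gadget. Then I would argue that playing $a_i$ ``opens up'' the clause gadgets $C_j$ containing $X_i$: once some $a_i$ with $X_i \in C_j$ is played, the vertices $c_j, c^1_j,\dots,c^n_j$ of $C_j$ all become dominated (they are adjacent to $a_i$), so the only undominated vertices left in $C_j$ are $d_j, d^1_j,\dots,d^n_j$, and a single later move $c_j$ (which is adjacent to $d_j$ and to every $d^\ell_j$) dominates all of them — one move for $C_j$. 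Conversely if $X_i \notin$ any played-$a$ variable is needed, Staller can waste a move in $C_j$, but the POS-CNF winning strategy guarantees that in the end every clause contains a TRUE variable, i.e.\ some $a_i$ with $X_i \in C_j$ is played, so Dominator keeps the per-clause cost at essentially one.

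The concrete strategy for Dominator: on $d_1$ play $u_{2n+7}$ (the \fast\ move in $H_{2n+7}$) — since $u_0$ is the cut vertex joining $H$ to the rest, and $H_{2n+7}$ then contributes exactly $2n+8$ forced moves $u_{2n+7},\dots,u_1,u_0$; whenever Staller plays a move that (in the imagined POS-CNF game) corresponds to setting $X_i$ FALSE by playing inside $B_i$, Dominator responds inside the same $B_i$ to finish it in $3$ moves total and updates the imagined game; whenever it is Dominator's turn and the imagined game says ``set $X_j$ TRUE'', he plays $a_j$ in $B_i$ (legal once $u_0$ is played, since $a_i \sim u_0$); he finishes off each $B_i$ with its remaining cheap vertices, finishes each $C_j$ with the single move $c_j$, and (if $k$ odd) plays $p_1,p_2,p_3$ in $A$ in $3$ moves. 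The bookkeeping — counting $2n+8$ for $H$, $3k$ for the $B_i$'s, $3$ for $A$ when $k$ odd, and at most $n$ for the $C_j$'s — yields $3k+4n+8$ (even) and $3k+4n+11$ (odd). The parity gadget $A$ is present precisely so that the number of ``cheap'' non-$H$ moves has the right parity for the turn-order to work out, i.e.\ so Dominator, not Staller, gets to make the crucial opening moves $a_i$.

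The main obstacle I anticipate is \emph{legality and turn-synchronization} of the imagined POS-CNF game with the real game — the issue flagged in the introduction as needing ``even more precise'' care. Specifically: (i) a move of Staller in the real game might not correspond to any legal POS-CNF move (e.g.\ Staller plays in a clause gadget $C_j$, or plays a ``useless'' $B$-vertex, or plays $u_0$ early, or plays in $A$); Dominator must have a rule for how to interpret or absorb such moves into the imagined game without losing track of the count, presumably by treating a wasted Staller move as ``free'' and either making an arbitrary legal imagined move himself or postponing — and one must verify this never causes the real game to run long. (ii) One must check that whenever the imagined strategy tells Dominator to play $a_j$, that vertex is still a legal move in $G_\F$ (it dominates something new — true as long as $C_j$-vertices or $B_j$-vertices remain undominated) and that the played set stays connected (true because $a_i \sim u_0$ and $u_0$ is played after $d_1$ triggers the $H$-chain; care is needed about the very first moves before $u_0$ is on the board). (iii) The parity/ordering bookkeeping with the $A$-gadget must be done carefully so that the total move count, and in particular who is forced to move where, comes out exactly as claimed; getting an off-by-one here would break the bound. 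I would structure the write-up so that these legality checks are isolated into a short lemma or a clearly delineated sequence of case-claims, with the counting collected at the very end.
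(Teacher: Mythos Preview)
Your overall approach is the paper's: open with $d_1 = u_{2n+7}$ to force \fast\ on $H_{2n+7}$ (so exactly $2n+8$ moves there, with Staller playing $u_0$), then run Player~1's winning POS-CNF strategy to choose which $a_i$ to play, responding locally to Staller elsewhere. Two concrete pieces of your accounting break, however.

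First, the clause-gadget budget is \emph{two} moves per $C_j$, not one. As soon as Dominator plays some $a_i$ with $X_i\in C_j$, Staller can legally play $c_j^\ell$ (adjacent to $a_i$, newly dominating $d_j^\ell$), and only afterwards does Dominator play $c_j$. Your explicit bookkeeping ``$2n+8$ for $H$, $3k$ for the $B_i$'s, $n$ for the $C_j$'s'' sums to $3k+3n+8$, not $3k+4n+8$; the correct line is $2n$ on the $C_j$'s. What must actually be argued is that Staller cannot force \emph{more} than two on any $C_j$ (the paper's rule: whenever Staller plays on $C_j$, Dominator replies $c_j$ immediately), and you have misidentified the role of the POS-CNF win --- it does not hold the $C_j$-cost to one; it guarantees every $C_j$ is \emph{reachable} through some already-played $a_i$, so that no fourth move on any $B_i$ is ever needed just to make $c_j$ legal.

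Second, your handling of the $A$-gadget is wrong. If Dominator plays $p_1$, Staller answers $q_1$, and then four moves are required on $A$, not three (this is exactly the observation stated after the definition of $A$). The paper therefore does \emph{not} have Dominator volunteer $p_1,p_2,p_3$: if Staller enters $A$ first (necessarily at $p_1$), Dominator replies $p_2$ and three moves suffice; if instead Dominator is forced to open $A$ (only legal move left), a parity argument using that $3k$ is odd shows at most $4n+3k+7$ moves were spent on the rest of $G_\F$, so four moves on $A$ still keeps the total at $3k+4n+11$. Your write-up needs this case split and the parity count; simply asserting ``$3$ moves on $A$'' does not survive Staller's reply $q_1$.
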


\begin{proof}
	We describe Dominator's strategy that ensures that the game ends in at most $3k + 4n + 8$ moves if $k$ is even and in at most $3k + 4n + 11$ moves if $k$ is odd. Most of the proof is the same for both parities of $k$.
	
	Dominator starts the game by playing $d_1 = u_{2n+7}$. Thus he forces the strategy \fast\ to be played on $H_{2n+7}$, so the moves $s_1 = u_{2n+6}, d_2 = u_{2n+5}, \ldots, d_{2n+7} = u_1, s_{2n+8} = u_0$ are forced. As $2n+8$ is even, Staller plays $u_0$. Observe that no vertex $d_j^m$ can be a legal move in the rest of the game.
	
	Dominator's next move is to play $d_{2n+9} = a_i$ if setting $X_i$ to TRUE is the optimal first move of Player 1 in the POS-CNF game on $\F$. For the rest of the game, Dominator will imagine a game is being played on $\F$, specifying certain moves of Staller and himself as moves in the POS-CNF game. More precisely, in the game on $\F$, Player 1 will be playing optimally, and their moves will determine some of the corresponding moves of Dominator on $G_\F$. As Player 1 has a winning strategy on $\F$, they can win no matter how Player 2 plays. Player 2's moves on $\F$ will be determined by some of Staller's moves on $G_\F$. Which moves of Staller mean that Player 2 makes a move on $\F$ and how does Player 1's reply on $\F$ translate to Dominator's reply on $G_\F$ is explained in (6) below.
	
	We will prove that Dominator can ensure on average at most three moves on each $B_i$, at most two moves on each $C_j$ and at most three moves on $A$. By saying on average we mean that the moves can be counted as at most three on each $B_i$ and at most two on each $C_j$, as Dominator's strategy ensures that if four moves are played on some $B_i$, there is a $C_j$ with only one move played on it. 
	
	Dominator follows the rules listed below (each rule is roughly summarized first, followed by the exact Dominator's strategy) with the following addition: when we say that Dominator plays \emph{any legal move}, we mean that he plays any $c_j$, $a_i$ or $e_i$ if possible (in this order of preference), and otherwise he plays any vertex. This is to ensure at most two moves on each $C_j$ and at most three moves on each $B_i$ even when Dominator does not have a more precise strategy. As a result, it might sometimes happen that Dominator's strategy below wants him to play a vertex that is not a legal move anymore. In this case, he plays any legal move (with the rules given above), but he still performs potential other actions associated with the move that is not legal anymore as if he played it now. It is thus necessary for Dominator to keep track of the vertices he played due to an exact strategy and the vertices he played as any legal moves. 
	\begin{enumerate}[(1)]
		\item If Staller plays $p_1$, Dominator replies by playing $p_2$.\\
		This ensures that if Staller is the first to play on $A$, then exactly three moves are made on $A$ during the game.
		
		\item If Staller plays $p_3$ or $q_2$, then Dominator plays any legal move.\\
		Note that this means that on $G - H_{2n+7} -A$, Dominator makes two consecutive moves which we have to consider when analyzing some of the cases below.
		
		\item If Staller plays on $C_j$, Dominator plays $c_j$.\\
		If Dominator played first on $C_j$ (before this move of Staller), then his strategy ensures he played $c_j$, thus no more moves are legal on $C_j$. So we know that the current move of Staller is in fact the first move on $C_j$. If there are still undominated vertices on $C_j$ after Staller's move, $c_j$ is a legal reply for Dominator that ensures that at most two moves are made on $C_j$ during the game. Otherwise, so if Staller's move on $C_j$ leaves no undominated vertices in $C_j$, then Staller played $c_j$, Dominator plays any legal move, and just one move is made on $C_j$ during the game.
		
		\item If Staller plays on $B_i$ and not all vertices of $B_i$ are dominated after her move, Dominator replies on $B_i$ as well.\\
		More precisely, if Staller is the first to play on some $B_i$, then Dominator replies by playing $e_i$. If Staller is not the first to play on $B_i$, but $B_i$ is still not entirely dominated, then this is only possible if Dominator played the first move on this $B_i$, so by (6), he played $a_i$ before. If Staller now plays $e_i$, he replies on $b_i$, and if she plays $b_i$, he selects $e_i$ (no other move on $B_i$ is a legal move for Staller at this stage of the game). Note that if Dominator made two consecutive moves on $B_i$, they were $a_i$ and $e_i$, thus after Staller's move on $B_i$, there are no more undominated vertices left and we are not in this case. Similarly if Dominator made three consecutive moves on $B_i$.
		
		Notice that with this strategy, Dominator ensures that no more than three moves are played on each $B_i$ unless Staller plays a move from (5).
		
		\item If Staller plays $a_i$ and all vertices of $B_i$ are dominated already before her move, Dominator replies by playing $c_j$ in the appropriate gadget $C_j$.\\
		Note that if such a move of Staller was legal, it must have newly dominated vertices $c_j, c_j^1, \ldots, c_j^n$ for some $j \in [n]$. Thus Dominator's reply on $c_j$ is a legal move. Additionally, notice that while four moves were played on $B_i$, only one move was played on $C_j$ (and no more moves on $B_i$ or $C_j$ are possible during the game). For counting purposes, we consider this as three moves on $B_i$ and two on $C_j$.
		
		\item If Staller plays on $B_i$ that was not dominated before her move and all vertices of $B_i$ are dominated after her move, Dominator selects an appropriate gadget $B_{j}$ to play on.\\
		If $X_i$ already has an assigned value in the imagined POS-CNF game on $\F$, then Dominator plays any legal move. Otherwise, Dominator imagines that Player 2 set $X_i$ to FALSE in the POS-CNF game on $\F$ now. If this ends the POS-CNF game on $\F$, Dominator plays any legal move. Otherwise, Dominator considers Player 1's optimal strategy in the POS-CNF game, and plays on $B_{j}$ such that Player 1's strategy is to set $X_{j}$ to TRUE. Dominator's strategy is to play $a_j$ if possible, otherwise he plays $b_j$ if this is the last move on $B_j$, or he plays any legal move (in this order of preference). Once Dominator sets $X_j$ to TRUE as Player 1's move on $\F$ and wants to play on $B_j$, the following options are possible for $B_j$: no move has been played on it so far (Dominator now plays $a_j$), Dominator already played one or more moves on it (Dominator now plays $b_j$ if this is the last move on $B_j$, or he plays any legal move, but we know he played $a_j$ before), Staller and Dominator both played on $B_j$ before (Dominator now plays $a_j$ if possible, or $b_j$ if this is the last move on $B_j$, or he plays any legal move, but again we know that $a_j$ has been played before). In all cases we notice that Dominator is ensuring that at most three moves have been or will be played on $B_j$, $a_j$ is one of them, and that if Staller was the first to play on $B_j$, all vertices of $B_j$ are dominated after this move.

		Notice that with this strategy, Dominator ensures that for all indices $i \in I$ that Player 1 sets to TRUE in his winning strategy in the POS-CNF game on $\F$, the vertices $a_i$, $i \in I$, are played. Thus by the end of the POS-CNF game, all vertices $c_1, \ldots, c_n, \ldots, c_1^n, \ldots, c_n^n$ will be dominated in $G_\F$ (since Player 1 wins on $\F$) and so all vertices from $C_1, \ldots, C_n$ can be dominated by using at most three moves on each $B_i$ and at most two on each $C_j$ (on average).

	\end{enumerate}
	With the described strategy, Dominator ensures exactly $2n+8$ moves on $H_{2n+7}$, and on average at most three moves on each $B_i$ and at most two moves on each $C_j$. Observe also that the imagined POS-CNF game either ends with the win of Player 1 (meaning that all vertices $c_1, \ldots, c_n, \ldots, c_1^n, \ldots, c_n^n$ are dominated and are thus legal moves can be played on any $C_1, \ldots, C_n$ that is not dominated yet) or the POS-CNF game does not end as some variables never get assigned values. By (6) we know that Dominator played the third and last move on all the remaining (unassigned) variables. If the first player playing on such $B_i$ did no play $a_i$, then we know Staller made the move on $b_i$ and Dominator replied by playing $e_i$. Thus by Dominator's strategy, he can play $a_i$ as the last move on $B_i$. If Dominator now finished playing the POS-CNF game with the remaining variables, selecting moves of Player 2 arbitrarily, he would still win, and by the argument above, it still holds that for all indices $i \in I$ that Player 1 sets to TRUE in the POS-CNF game on $\F$, the vertices $a_i$, $i \in I$, are played. Hence, in this case, all vertices $c_1, \ldots, c_n, \ldots, c_1^n, \ldots, c_n^n$ are dominated as well. Thus Dominator also ensures that by keeping at most three moves played on each $B_i$ and at most two on each $C_j$, all vertices from $G_\F - H_{2n+7} - A$ can be dominated (i.e.\ it cannot happen that no move on some $C_j$ would be legal). 
	
	If $k$ is even, there are no moves on $A$. If $k$ is odd and Staller made the first move on $A$, then by (1), three moves are played on $A$. Thus the only remaining case to consider is if $k$ is odd and Dominator is forced to play the first move on $A$. 	This happens if only vertices on $A$ are undominated, $p_1$ has not been played before, and it is Dominator's turn now, Dominator must play $p_1$ (it is the only legal move in the game), thus possibly four moves are made on $A$. (His consequent move is $p_2$ or $p_3$.) If at most $2n+8+3k+2n-1 = 4n+3k+7$ moves were made during the game before this move, the game ends after at most $4n+3k+11$ moves. Otherwise, so if exactly $2n+8+3k+2n = 4n+3k+8$ moves were made before (by arguments above Dominator ensures at most three moves on each $B_i$ and at most two on each $C_j$), as $k$ is odd, $4n+3k+8$ is also odd, so it cannot be Dominator's turn now. Thus if four moves are played on $A$, then at most $4n+3k+7$ moves were played on the rest of the graph.

	It follows from the above that the game ends in at most $2n+8+3k+2n = 4n+3k+8$ moves if $k$ is even, and in at most $2n+8+3k+2n+3=4n+3k+11$ moves if $k$ is odd.	
\end{proof}

\begin{lemma}
	\label{lem:p2-wins}
	If Player 2 has a winning strategy for the POS-CNF game played on $\F$, then $$\gcg(G_\F) \geq \begin{cases}
		3k + 4n + 9; & k \text{ even},\\
		3k + 4n + 12; & k \text{ odd}.
	\end{cases}$$
\end{lemma}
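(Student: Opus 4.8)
The plan is to give Staller a strategy in the D-game on $G_\F$ that forces at least the claimed number of moves, by mirroring Player 2's winning strategy in the POS-CNF game on $\F$. The target count is $3k+4n+8$ (resp. $3k+4n+11$) \emph{plus one}, so compared to the lower bound for $\gc(G_\F)$ in Lemma~\ref{lem:gc-min} we must squeeze out $n$ extra moves (one per gadget $C_j$) and one further move. Concretely, I would argue: (a) no matter how the game goes, $H_{2n+7}$ costs at least $2n+8$ moves (the observation after Lemma~\ref{lem:gc-min}); (b) each $B_i$ costs at least $3$ moves and $A$ costs at least $3$ moves when $k$ is odd (Lemma~\ref{lem:gadget}, Lemma~\ref{lem:gc-min}); (c) Staller can guarantee \emph{two} moves on every $C_j$; and (d) Staller can force one additional move somewhere. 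Summing (a)--(d) gives exactly $2n+8 + 3k + n + 1 = 3k+4n+9$ when $k$ is even, and $2n+8+3k+3+n+1 = 3k+4n+12$ when $k$ is odd.

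For step (c), the key point is the third bullet after Lemma~\ref{lem:gc-min}: if Staller is the first to play on $C_j$, then at least two moves are played there. So Staller's priority, whenever she has a legal move into an as-yet-untouched $C_j$, is to take it — she plays some $c_j^m$ (a leaf-type vertex), which is legal precisely when $a_i$ has been played for some variable $X_i$ occurring in clause $C_j$, and which leaves $c_j$ undominated, forcing a second move. The role of the POS-CNF imagination is to control \emph{which} $C_j$'s become available: Staller imagines Player 2's winning strategy, translating Dominator's plays of the form $a_i$ into Player 1 setting $X_i$ TRUE and her own plays $a_i$ (or forced choices) into Player 2 setting $X_i$ FALSE. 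Since Player 2 wins, when all variables are assigned there is a clause $C_j$ with \emph{no} TRUE variable, i.e. no $a_i$ among its variables was played, so Dominator never got a "free" early move on that $C_j$; but Staller still gets to be first on every $C_j$ that does open up. The careful bookkeeping — analogous to the six-rule case analysis in Lemma~\ref{lem:p1-wins} but from Staller's side — must show that Dominator cannot avoid this by playing $c_j$ himself before $a_i$ is available (he can't, since $c_j$ is only legal after some incident $a_i$ is played or via $d$, and one checks the $H$-attachment and gadget structure blocks the shortcut), and that every $B_i$ and $A$ still absorb their full quota regardless.

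For step (d), the extra $+1$: this should come from $A$ when $k$ is odd (the parity argument in the construction — $A$ is attached precisely to fix parity — means Staller can force $4$ moves on $A$ rather than $3$ in a way that is not offset, exactly the situation Dominator worked to avoid in the last paragraph of Lemma~\ref{lem:p1-wins}), and when $k$ is even from the fact that after all the forced $H$-moves and the two-per-$C_j$, three-per-$B_i$ tallies, the parity of the move sequence leaves Staller with one more forced legal move (e.g. she can always arrange that some $B_i$ absorbs a fourth move without a compensating single-move $C_j$, since she controls whether she is first on a $C_j$). I would phrase this uniformly: Staller maintains the invariant that the number of "saved" moves Dominator has banked (single-move $C_j$'s) never exceeds the number of "wasted" moves she has forced minus one.

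\medskip
\noindent
\textbf{Main obstacle.} The hard part is the same delicate accounting that makes Lemma~\ref{lem:p1-wins} long: one must verify that Dominator, playing adversarially, cannot "cheat" the translation — e.g. by making moves inside $B_i$ or $C_j$ out of the expected order, by declining to play $a_i$ and instead reaching $c_j$ through the $d_j$/$d_j^m$ vertices, or by using moves on $A$ or $H_{2n+7}$ to dodge a forced reply — so that the correspondence between the real game and the imagined POS-CNF game remains well-defined and legal at every step, and the per-gadget lower bounds (two on each $C_j$, three on each $B_i$, three-or-four on $A$) genuinely compose into the global bound. In particular, one has to rule out that Dominator ever gets strictly fewer than two moves on a $C_j$ that Staller wanted to "open", which is where the structural Lemmas~\ref{lem:gadget} and~\ref{lem:gc-min} and the specific attachment of the $c_j^m$'s to the $a_i$'s do the real work.
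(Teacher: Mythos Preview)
Your high-level plan (per-gadget lower bounds plus an imagined POS-CNF game) matches the paper, but there is a genuine gap in the translation between the two games that breaks both step~(c) and step~(d).

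You write that Staller translates Player~2's moves by playing $a_i$. This is precisely the wrong vertex. If Staller plays $a_i$, she herself dominates all of $c_j, c_j^1,\ldots,c_j^n$ for every clause $C_j$ containing $X_i$, and it is now \emph{Dominator's} turn. Dominator can immediately play $c_j$ (legal, since it newly dominates $d_j$), closing that $C_j$ with a single move. Staller cannot be first on every $C_j$ she opens in this way, so claim~(c) fails. The paper's strategy has Staller play $b_i$ instead: this contributes to the three moves on $B_i$ but leaves $a_i$ unplayed, so the gadgets $C_j$ attached to $a_i$ remain unreachable unless Dominator opens them himself. (Incidentally, when Staller does play $c_j^m$ it is $d_j$, not $c_j$, that is left undominated; $c_j$ is already dominated once any incident $a_i$ is played.)

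This same error undermines your source for the extra $+1$. In the paper the mechanism is: since Staller avoids the vertices $a_i$, and Player~2 wins, at the end of the variable phase there is a clause $C_{j_0}$ all of whose $a_i$'s remain unplayed; hence $C_{j_0}$ is still unreachable and undominated. To dominate it someone must eventually play one of those $a_i$'s, which is a \emph{fourth} move on that $B_i$ (already fully dominated), and Staller then answers on $C_{j_0}$ to get two moves there as well. That is where the $+1$ comes from (equivalently, the paper phrases this as the game being forced into ``Phase~2''). Your parity sketch for even $k$ and the $A$-argument for odd $k$ do not capture this; the $A$ gadget is only needed for one boundary case inside the paper's rule~(3), not as the generic source of the extra move.

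Two smaller omissions: you do not handle Dominator's first move being outside $H_{2n+7}$ (the paper dispatches this with the \textsc{Slow} strategy on $H_{2n+7}$, gaining far more than needed), and the ``invariant'' in your last sentence of step~(d) is stated but not established --- the paper replaces it with an explicit two-phase case analysis of Dominator's possible replies on each $B_i$.
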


\begin{proof}
	We describe Staller's strategy that ensures that the game ends in at least $3k + 4n + 9$ moves if $k$ is even and in at least $3k + 4n + 12$ moves if $k$ is odd. Most of the proof is the same for both parities of $k$. Staller's strategy is composed of two phases and a series of general rules that Staller follows during both phases. Phase 1 lasts until too many vertices have been played on $H_{2n+7}$, $A$, some gadget $B_i$ or some gadget $C_j$. Afterwards, the game is in Phase 2 till the end. The general rules and both phases are described in detail later.
	
	If Dominator's first move is a vertex from $G_\F - H_{2n+7}$, then Staller's strategy is as follows. She partitions the graph into two subgraphs, $H_{2n+7} - u_0$ and $G_\F - (H_{2n+7} - u_0)$, and when Dominator plays on one of the parts, she plays on the same part if possible. Additionally, she follows the strategy \slow\ on $H_{2n+7}$, i.e.\ she plays on $x_1, \ldots, x_{2n+6}$ whenever possible. By Lemma \ref{lem:gc-min}, at least $3k + n + 1$ moves are played on $G_\F - (H_{2n+7} - u_0)$ (including $u_0$). As Staller is not able to reply to Dominator's move on $G_\F - (H_{2n+7} - u_0)$ at most once, she is able to ensure that at least $2n+5$ vertices among $x_1, \ldots, x_{2n+6}$ are played during the game. Thus the game will last for at least $(3k + n + 1) + (2n+7) + (2n+5) = 5n+3k+13$ which is more than the desired lower bound.
	
	If Dominator's first move is a vertex from $H_{2n+7} - u_{2n+7}$, at least $2n+9$ moves are needed to dominate $H_{2n+7}$. If Staller plays $u_0$, the rest of the game proceeds as in the next case (i.e.\ if Dominator started on $u_{2n+7}$), except that the game is in Phase 2 from the start. If Dominator plays $u_0$, then Staller plays $b_1$ in her next move. For the rest of the game (which is also in Phase 2 already), she uses the same strategy as if Dominator started on $u_{2n+7}$, and pretends that she has not played $b_1$ yet. Thus during the strategy described below, it may happen that Staller's prescribed move is not legal anymore. In this case she pretends that she played the move now, follows any additional part of her strategy related to this move, but in reality she plays \emph{any legal move} with the following restrictions: if possible she never plays vertices $c_1, \ldots, c_n$ or $a_1, \ldots, a_k$. Additionally, if she plays two moves on the same $B_i$ while Dominator plays no moves there, she plays $b_i$ and $f_i^1$, thus the game enters Phase 2.
	
	From now on, assume that $d_1 = u_{2n+7}$. The next few moves are forced: $s_1 = u_{2n+6}, d_2 = u_{2n+5}, \ldots, d_{2n+7} = u_1, s_{2n+8} = u_0$. As $2n+8$ is even, Staller plays $u_0$. Observe again that no vertex $d_j^m$ can be a legal move in the rest of the game.
	
	For the rest of the game, Staller will imagine a game is being played on $\F$, specifying certain moves of Dominator and herself as moves in the POS-CNF game. More precisely, in the game on $\F$, Player 2 will be playing optimally, and their moves will determine some of the corresponding moves of Staller on $G_\F$. As Player 2 has a winning strategy on $\F$, they can win no matter how Player 1 plays. Player 1's moves on $\F$ will be determined by some of Dominator's moves on $G_\F$. Which moves of Dominator mean that Player 1 makes a move on $\F$ and how does Player 2's reply on $\F$ translate to Staller's reply on $G_\F$ is explained in (2) below.
	
	 Staller's staretegy consists of two phases, but some part of her strategy is the same for both. Again we list rules with a concise but simplified description first.
	
	\begin{description}
		\item [General rules] If Dominator's move is among these rules, Staller plays according to them no matter which phase the game is in.
		\begin{enumerate}[(A)]
			\item If Dominator plays $p_1$, Staller plays $q_1$.\\
			This ensures at least four moves are played on $A$ if Dominator is the first player to play on it.
			\item If Dominator plays $p_2$, Staller plays $q_2$.
			\item If Dominator plays some $a_i$ in $B_i$, then Staller plays on appropriate gadgets $C_j$ for the next sequence of moves, until the game enters Phase 2 or she is directed to play on $B_i$.\\
			As $a_i$ was a legal move for Dominator, it newly dominated some $c_{j_1}, \ldots, c_{j_m}$, $m\geq 1$. Staller's strategy is to first play $c_{j_1}^1$. If Dominator replies by playing $c_{j_1}$, then Staller plays $c_{j_2}^1$. If Dominator replies to Staller's move on $c_{j_r}^1$ by playing $c_{j_r}$, then Staller plays $c_{j_{r+1}}^1$ next. If it exists, let $p \in [m]$ be the first index where Staller played $c_{j_p}^1$ and Dominator did not reply by playing $c_{j_p}$. If Dominator plays on $A$, Staller follows (A) or (B). If Dominator plays some other $a_{i'}$, she follows (C) from the start (each $C_{j'}$ on which moves can be played thus has an associated gadget $B_{i'}$ and set of gadgets $C_j$). Otherwise, Staller plays a vertex from $\{c_{j_p}^2, \ldots, c_{j_p}^n\}$. If Dominator is playing only on vertices $c_j$, then this strategy of Staller ensures that on average at least two moves are played on each gadget $C_j$ (only one on some, but accordingly more on $C_{j_p}$). If Dominator plays any other vertex, Staller's strategy ensures that at least three vertices will be played on $C_{j_p}$ (even after averaging the counting as before), thus the game enters Phase 2.
			
			If no move on $C_{j_1}, \ldots, C_{j_m}$ is legal and the game is still in Phase 1, then Staller plays on $B_i$ according to the rules given in (1) below (the game stays in Phase 1 only if Dominator is playing only on vertices $c_{j_1}, \ldots, c_{j_m}$ until no more move on $C_{j_1}, \ldots, C_{j_m}$ is legal, thus no player made a move on the rest of the graph in the meantime). If no move on $C_{j_1}, \ldots, C_{j_m}$ is legal and the game is now in Phase 2, then Staller follows the rules of Phase 2.
		\end{enumerate}

		\item[Phase 1] Exactly $2n+8$ moves were played on $H_{2n+7}$, at most three moves were played on $A$, and on average, at most three moves were played on every $B_i$, $i \in [k]$, and at most two moves on every $C_j$, $j \in [n]$.\\
		Staller plays according to the general rules in addition to the rules given below. By saying on average when counting moves we mean that possibly there are four moves on some $B_i$, but then this $B_i$ is associated with a gadget $C_j$ on which only one move was played, or that possibly there are more moves played on some $C_j$ but then there is only one move played on accordingly many other gadgets $C_{j'}$.
		\begin{enumerate}[(1)]
			\item If Dominator plays on $B_i$ and not all vertices of $B_i$ are dominated after his move, Staller plays on $B_i$.\\
			More precisely, if Dominator's move was the first on the gadget, then he played $a_i$ or $b_i$. If he played $a_i$, Staller follows the general rule (C) and if it eventually directs her to play on $B_i$, she plays $b_i$. If Dominator played $b_i$, she selects $f^1_i$. In the latter case, at least four moves will be needed to dominate $B_i$, thus the game enters Phase 2. See Figure~\ref{fig:St-strat-1a}. 
			
			\begin{figure}[!ht]
				\begin{center}
					\begin{tikzpicture}[thick, scale = 1]
						\tikzstyle{every node}=[circle, draw, fill=black!10, inner sep=0pt, minimum width=4pt]
						
						\begin{scope}			
							\node[fill=black] (a) at (0,0) {};
							\node[fill=black] (e) at (1,0) {};
							\node[fill=black] (b) at (2,0) {};
							\node[fill=black] (d) at (3,0) {};
							
							\node[fill=black] (h) at (1,1) {};
							\node[fill=black] (k1) at (0.5,1) {};
							
							\node[fill=black] (f1) at (2,1) {};
							\node (g1) at (1.5,1) {};
							
							\node[fill=black] (f2) at (2,-1) {};
							\node (g2) at (1.5,-1) {};
							
							\draw (a) -- (e) -- (b) -- (d);
							\draw (a) -- (h) -- (e) -- (f1) -- (b) -- (f2) -- (e) -- (g1) -- (f1);
							\draw (e) -- (g2) -- (f2);
							\draw (h) -- (k1) -- (a);
							
							\node[fill=none, draw=darkgray,
							inner sep=1pt, minimum width=10pt,line width=2pt,label=below:{$d_j$}] (d1) at (0,0) {};
							\node[fill=none, draw=gray,
							inner sep=1pt, minimum width=10pt,line width=2pt,label=below right:{$s_{j'}$}] (s1) at (2,0) {};
						\end{scope}
						
						\begin{scope}[xshift=5cm]		
							\node (a) at (0,0) {};
							\node[fill=black] (e) at (1,0) {};
							\node[fill=black] (b) at (2,0) {};
							\node[fill=black] (d) at (3,0) {};
							
							\node (h) at (1,1) {};
							\node (k1) at (0.5,1) {};
							
							\node[fill=black] (f1) at (2,1) {};
							\node[fill=black] (g1) at (1.5,1) {};
							
							\node[fill=black] (f2) at (2,-1) {};
							\node (g2) at (1.5,-1) {};

							\draw (a) -- (e) -- (b) -- (d);
							\draw (a) -- (h) -- (e) -- (f1) -- (b) -- (f2) -- (e) -- (g1) -- (f1);
							\draw (e) -- (g2) -- (f2);
							\draw (h) -- (k1) -- (a);
							
							\node[fill=none, draw=darkgray,
							inner sep=1pt, minimum width=10pt,line width=2pt,label=below right:{$d_j$}] (d1) at (2,0) {};
							\node[fill=none, draw=gray,
							inner sep=1pt, minimum width=10pt,line width=2pt,label=right:{$s_j$}] (s1) at (2,1) {};
						\end{scope}

					\end{tikzpicture}
					\caption{Staller's strategy in (1) if Dominator played first on $B_i$. Dominated vertices are colored black.}
					\label{fig:St-strat-1a}
				\end{center}
			\end{figure}
			
			If Dominator's move was not the first move on $B_i$, then Staller made the first move on $B_i$ by playing $b_i$ (by (2) and her restrictions when playing any legal move). If Dominator plays $e_i$, Staller replies on $h_i$. If he plays $f_i^{\ell}$ for some $\ell \in \{1,2\}$, then she plays $f_i^{3-\ell}$. Observe that in this case, at least one additional move is needed to dominate $B_i$, thus the game already enters Phase 2. If he plays $a_i$, she follows the general rule (C); if the rule eventually directs her to play on $B_i$, she selects $f_i^1$. In this case, at least one more move is needed on $B_i$, thus the game also enters Phase 2. See Figure~\ref{fig:St-strat-1b}.
			
			\begin{figure}[!ht]
				\begin{center}
					\begin{tikzpicture}[thick, scale = 1]
						\tikzstyle{every node}=[circle, draw, fill=black!10, inner sep=0pt, minimum width=4pt]
						
						\begin{scope}			
							\node[fill=black] (a) at (0,0) {};
							\node[fill=black] (e) at (1,0) {};
							\node[fill=black] (b) at (2,0) {};
							\node[fill=black] (d) at (3,0) {};
							
							\node[fill=black] (h) at (1,1) {};
							\node[fill=black] (k1) at (0.5,1) {};
							
							\node[fill=black] (f1) at (2,1) {};
							\node[fill=black] (g1) at (1.5,1) {};
							
							\node[fill=black] (f2) at (2,-1) {};
							\node[fill=black] (g2) at (1.5,-1) {};

							\draw (a) -- (e) -- (b) -- (d);
							\draw (a) -- (h) -- (e) -- (f1) -- (b) -- (f2) -- (e) -- (g1) -- (f1);
							\draw (e) -- (g2) -- (f2);
							\draw (h) -- (k1) -- (a);
							
							\node[fill=none, draw=darkgray,
							inner sep=1pt, minimum width=10pt,line width=2pt,label=below:{$d_j$}] (d1) at (1,0) {};
							\node[fill=none, draw=gray,
							inner sep=1pt, minimum width=10pt,line width=2pt,label=below right:{$s_{j_0}$}] (s1) at (2,0) {};
							\node[fill=none, draw=gray,
							inner sep=1pt, minimum width=10pt,line width=2pt,label=above:{$s_j$}] (s2) at (1,1) {};
						\end{scope}
						
						\begin{scope}[xshift=5cm]		
							\node[fill=black] (a) at (0,0) {};
							\node[fill=black] (e) at (1,0) {};
							\node[fill=black] (b) at (2,0) {};
							\node[fill=black] (d) at (3,0) {};
							
							\node (h) at (1,1) {};
							\node (k1) at (0.5,1) {};
							
							\node[fill=black] (f1) at (2,1) {};
							\node[fill=black] (g1) at (1.5,1) {};
							
							\node[fill=black] (f2) at (2,-1) {};
							\node[fill=black] (g2) at (1.5,-1) {};

							\draw (a) -- (e) -- (b) -- (d);
							\draw (a) -- (h) -- (e) -- (f1) -- (b) -- (f2) -- (e) -- (g1) -- (f1);
							\draw (e) -- (g2) -- (f2);
							\draw (h) -- (k1) -- (a);
							
							\node[fill=none, draw=darkgray,
							inner sep=1pt, minimum width=10pt,line width=2pt,label=above:{$d_j$}] (d1) at (2,1) {};
							\node[fill=none, draw=gray,
							inner sep=1pt, minimum width=10pt,line width=2pt,label=below right:{$s_{j_0}$}] (s1) at (2,0) {};
							\node[fill=none, draw=gray,
							inner sep=1pt, minimum width=10pt,line width=2pt,label=below:{$s_{j}$}] (s2) at (2,-1) {};
						\end{scope}
					
						\begin{scope}[xshift=10cm]		
							\node[fill=black] (a) at (0,0) {};
							\node[fill=black] (e) at (1,0) {};
							\node[fill=black] (b) at (2,0) {};
							\node[fill=black] (d) at (3,0) {};
							
							\node[fill=black] (h) at (1,1) {};
							\node[fill=black] (k1) at (0.5,1) {};
							
							\node[fill=black] (f1) at (2,1) {};
							\node[fill=black] (g1) at (1.5,1) {};
							
							\node[fill=black] (f2) at (2,-1) {};
							\node (g2) at (1.5,-1) {};

							\draw (a) -- (e) -- (b) -- (d);
							\draw (a) -- (h) -- (e) -- (f1) -- (b) -- (f2) -- (e) -- (g1) -- (f1);
							\draw (e) -- (g2) -- (f2);
							\draw (h) -- (k1) -- (a);
							
							\node[fill=none, draw=darkgray,
							inner sep=1pt, minimum width=10pt,line width=2pt,label=below:{$d_j$}] (d1) at (0,0) {};
							\node[fill=none, draw=gray,
							inner sep=1pt, minimum width=10pt,line width=2pt,label=below right:{$s_{j_0}$}] (s1) at (2,0) {};
							\node[fill=none, draw=gray,
							inner sep=1pt, minimum width=10pt,line width=2pt,label=above:{$s_{j'}$}] (s2) at (2,1) {};
						\end{scope}	
						
					\end{tikzpicture}
					\caption{Staller's strategy in (1) if Dominator played second on $B_i$. Dominated vertices are colored black.}
					\label{fig:St-strat-1b}
				\end{center}
			\end{figure}
			
			Notice that if Dominator makes the first move on $B_i$, then exactly three moves will be played on that $B_i$ or the game will enter Phase 2. If Staller makes the first move on $B_i$, then either three moves are played on it and $a_i$ is not one of them, or the game enters Phase 2.
			
			\item If Dominator plays on $B_i$, all vertices of $B_i$ are dominated after his move and the POS-CNF game has not ended yet, Staller selects an appropriate gadget $B_j$ to play on.\\
			If $X_i$ already has a value assigned in the POS-CNF game, then Staller plays any legal move. Otherwise, Staller sets $X_i$ to TRUE as the move of Player 1 in the POS-CNF game. If this ends the POS-CNF game, Staller plays as in (3). Otherwise, she lets Player 2 play optimally on $\F$, setting some $X_j$ to FALSE. Staller now tries to play on $B_j$.
			If no move was made before on $B_j$, Staller plays $b_j$. If exactly one move was made on $B_j$ before we know by (1) that it was a move of Staller, so she played $b_j$, and can play $f_j^1$ now, thus the game enters Phase 2. If two moves were played on $B_j$ so far, then as we are in Phase 1 and by (1), we know that Dominator played $a_j$ and Staller played $b_j$. Now Staller plays $f_j^1$, thus the game enters Phase 2.	It follows from the above that more moves on $B_j$ could not have been played yet as we are in Phase 1.		
			
			

			Note that this means that while we are in Phase 1, a variable $X_j$ is set to FALSE only if Staller played first on $B_j$, $a_j$ has not been played, and at most three moves were played on $B_j$. As long as the game is in Phase 1, we also know that $a_j$ will not be played.
			
%
			
			\item Otherwise, so if Dominator played on $B_i$, all vertices of $B_i$ are dominated after his move and the POS-CNF game has ended already, Staller does the following.\\
			The POS-CNF game has ended with the win of Player 2. If not all gadgets $B_i$ are dominated yet, Staller plays on one of them, but not playing $a_i$ (which she can as if $a_i$ is legal, so is $h_i$). If all gadgets $B_i$ are dominated already, then as Player 2 won the POS-CNF game, we know that there is at least one $C_j$ that is not dominated yet. As we are in Phase 1, we know that no move has been made on $A$ yet (by (A)). As all $B_i$ are dominated and we are in Phase 1, exactly three moves were made on each gadget $B_i$ and exactly zero or two moves were made on each gadget $C_j$ (on average). If $k$ is even, an even number of moves was played in the game so far, so it cannot be Staller's turn now. If $k$ is odd, it can indeed be Staller's turn now, and she plays $p_1$ (which is present as $k$ is odd). If Dominator replies by playing $q_1$, the game enters Phase 2. Otherwise, Dominator plays $p_2$, Staller plays $p_3$, and now it is Dominator's turn. The only legal moves for him are to play some $a_i$ that newly dominates some $c_j$. Staller now plays $c_j^1$ (as in (C)). This ensures four moves on $B_i$ and at least two on $C_j$, thus the game enters Phase 2.
		\end{enumerate}
		
		\item[Phase 2] At least $2n+9$ moves were played on $H_{2n+7}$, four moves have been or will be played on $A$, or on average, at least four moves have been or will be played on some gadget $B_i$ or at least three moves have been or will be played on some gadget $C_j$.\\
		Staller plays according to the general rules or if they are not applicable, she plays any legal move until the game ends with the following restrictions.
		\begin{itemize}
			\item If possible, Staller never plays vertices $c_1, \ldots, c_n$.\\
			Staller is forced to play on $c_1, \ldots, c_n$ only if some of these are the only legal moves in the game. But as $c_j$ is the only legal move in $C_j$ only if $d_j$ is the only undominated vertex in $C_j$, and so if $c_j^1, \ldots, c_j^n$ have all been played already, this move maintains the property that at least two moves are played on each $C_j$. 
			
			\item If possible, Staller never plays vertices $a_1, \ldots, a_k$.\\
			Suppose that $a_i$ is a legal move for Staller. If not all vertices of $B_i$ are already dominated, she can play $h_i$ instead (which is also legal as $a_i$ is legal). Otherwise, so if $a_i$ is a legal move only because it newly dominates vertices of some $C_j$, observe that at least three moves were already played on $B_i$ as it is already dominated. Staller now plays $a_i$ as the fourth move on $B_i$, allowing Dominator to dominate $C_j$ with only one move. But we can still count this as three moves on $B_i$ and two moves on $C_j$.
		\end{itemize}
	\end{description}
	
	Notice that by above rules, Staller is always ensuring that at least three moves are played on each $B_i$ and at least two on each $C_j$, on average. It also follows from the rules that the game always enters Phase 2. Thus Staller's strategy ensures that at least $(2n+8) + 3k + 2n + 1 = 4n+3k + 9$ moves are made if $k$ is even and at least $(2n+8) + 3k + 2n + 4 = 4n+3k + 12$ moves are made if $k$ is odd.
\end{proof}

We remark that a more detailed analysis of the game might yield that the graph $H_{2n+7}$ in $G_\F$ could be replaced with some $H_i$, $i \leq 2n+6$, but feel that a simpler proof better illustrates the general idea.

\begin{theorem}
	\label{thm:main}
	The \textsc{Game connected domination problem} is PSPACE-complete.
\end{theorem}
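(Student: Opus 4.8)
The plan is to assemble the two auxiliary lemmas into a log-space reduction from the \textsc{POS-CNF problem}, and to separately argue membership in PSPACE. Since Lemmas~\ref{lem:p1-wins} and~\ref{lem:p2-wins} already carry all of the game-theoretic weight, the remaining work is bookkeeping: choosing the threshold $m$, checking that the construction $\F \mapsto (G_\F, m)$ is computable in logarithmic space, and verifying that the game can be evaluated in polynomial space.

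First I would establish membership. On a connected graph $G$ the connected domination game lasts at most $|V(G)|$ moves, since each move dominates a new vertex; hence the game tree has depth and branching bounded by $|V(G)|$. A standard recursive minimax evaluation — at a position with played set $S$, iterate over all legal moves $v$, recurse on $S \cup \{v\}$, and return the minimum over Dominator-to-move positions and the maximum over Staller-to-move positions — decides whether $\gcg(G) \le m$ while storing only the current sequence of played vertices and a handful of counters, i.e.\ in space polynomial in the input length. Thus the \textsc{Game connected domination problem} lies in PSPACE.

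Next I would set up the reduction. Put $m = 3k+4n+8$ when $k$ is even and $m = 3k+4n+11$ when $k$ is odd. The \textsc{POS-CNF} game is a finite two-player game of perfect information with no draws, so exactly one of Player 1, Player 2 has a winning strategy on $\F$. If Player 1 wins, Lemma~\ref{lem:p1-wins} gives $\gcg(G_\F) \le m$; if Player 2 wins, Lemma~\ref{lem:p2-wins} gives $\gcg(G_\F) \ge m+1$. Combining the two, Player 1 wins the POS-CNF game on $\F$ if and only if $\gcg(G_\F) \le m$, which is exactly the reduction. It remains to see that $(G_\F,m)$ is produced from $\F$ in logarithmic space: the graph consists of $k$ copies of the fixed gadget $B$, $n$ copies of $C(n)$, one copy of $H_{2n+7}$, and (if $k$ is odd) one copy of $A$, with the only nontrivial edges being the $a_i$–$c_j$ adjacencies read off directly from the variable/clause incidence of $\F$; a transducer can emit all vertex labels and edges using counters of size $O(\log(k+n))$ to index the copies and scan the clause list, and $m$ is a single arithmetic expression in $k$ and $n$. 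Hence the map is log-space computable (a fortiori polynomial-time).

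Since the \textsc{POS-CNF problem} is log-complete in PSPACE~\cite{pos-cnf}, the above shows the \textsc{Game connected domination problem} is log-complete in PSPACE, and in particular PSPACE-complete. The genuine difficulty is not in this assembly but already discharged in Lemmas~\ref{lem:p1-wins} and~\ref{lem:p2-wins}; the only points here requiring care are the "no draws / exactly one winner" dichotomy for the POS-CNF game (needed to turn the one-directional implications into an equivalence) and the explicit verification that indexing the $2n+7$-path of $H$ and the $n$ copies of $C(n)$ stays within logarithmic space.
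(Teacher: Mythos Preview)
Your proposal is correct and follows essentially the same route as the paper: membership in PSPACE by direct evaluation of the game tree, hardness by the reduction $\F \mapsto (G_\F,m)$ with $m = 3k+4n+8$ (resp.\ $3k+4n+11$) and the equivalence obtained by combining Lemmas~\ref{lem:p1-wins} and~\ref{lem:p2-wins}. The only differences are cosmetic---you spell out the minimax evaluation for membership and fold the log-space argument of Corollary~\ref{cor:main} into the theorem itself.
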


\begin{proof}
	It is easy to see that the \textsc{Game connected domination problem} is in PSPACE. (For example, use the fact that NP $\subseteq$ PSPACE.)
	
	To prove that the problem is PSPACE-complete, we use a reduction from the \textsc{POS-CNF problem}. Given a POS-CNF formula $\F$, the graph $G_\F$ is obtained as described in Section \ref{sec:construction}. Combining Lemmas \ref{lem:p1-wins} and \ref{lem:p2-wins} implies the following. If $k$ is even, $\gcg(G_\F) \leq 3k + 4n + 8$ if and only if Player 1 wins the POS-CNF game on $\F$. If $k$ is odd, $\gcg(G_\F) \leq 3k + 4n + 11$ if and only if Player 1 wins the POS-CNF game on $\F$. Since POS-CNF problem is known to be PSPACE-complete and the described reduction from \textsc{POS-CNF problem} to \textsc{Game connected domination problem} can be computed with a polynomial size working space, the desired result follows.
\end{proof}

Since the reduction in Theorem~\ref{thm:main} can be computed with a logarithmic size working space, and since \textsc{POS-CNF problem} is known to be log-complete in PSPACE~\cite{pos-cnf}, we obtain an even stronger result.

\begin{corollary}
	\label{cor:main}
	The \textsc{Game connected domination problem} is log-complete in PSPACE.
\end{corollary}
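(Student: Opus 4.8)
The plan is to note that essentially all of the work is already contained in Theorem~\ref{thm:main}: it exhibits a many-one reduction $g\colon \F \mapsto (G_\F, m)$ from the \textsc{POS-CNF problem} to the \textsc{Game connected domination problem}, together with membership of the latter in PSPACE. Since the \textsc{POS-CNF problem} is itself log-complete in PSPACE~\cite{pos-cnf}, and since log-space reductions are closed under composition, it suffices to verify the one extra claim that $g$ can be evaluated by a deterministic transducer using only $O(\log|\F|)$ cells of working memory. Granting this, every language in PSPACE log-space-reduces to \textsc{POS-CNF}, which in turn log-space-reduces to the \textsc{Game connected domination problem}, so the latter is PSPACE-complete under log-space reductions.

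To check the log-space bound I would describe, for each requested output bit of $(G_\F, m)$, a procedure that stores only a constant number of indices ranging over $[k]$, over $[n]$, and over the (constantly many) vertex labels of the fixed gadgets $B$, $C(n)$, $A$, and $H_{2n+7}$; each such index fits in $O(\log(k+n))$ bits. A vertex of $G_\F$ is named by a constant-length type tag together with at most two indices in $[k]\cup[n]$ (the gadget number and the within-gadget label), because $V(G_\F)$ is the disjoint union of $k$ copies of $B$, $n$ copies of $C(n)$, one copy of $H_{2n+7}$, and --- iff $k$ is odd --- one copy of $A$; enumerating these names is nested counting. Adjacency within a single gadget is a fixed finite table. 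The only $\F$-dependent edges are $u_0p_1$ (present iff $k$ is odd, a parity check performed while scanning $\F$) and the edges joining $\{c_j, c^1_j, \dots, c^n_j\}$ to $a_i$, which are present precisely when the variable $X_i$ occurs in the clause $C_j$; this last test is a single left-to-right scan of $\F$ holding the pair $(i,j)$ and one boolean. Finally $m$ equals $3k+4n+8$ or $3k+4n+11$ according to the parity of $k$, and $k$, $n$ (hence $m$ in binary) are read off by scanning $\F$ with counters. Thus each output bit is a log-space-computable function of $\F$, so $g$ is a log-space reduction.

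Given this, the corollary follows formally. Let $L \in \mathrm{PSPACE}$. By log-completeness of \textsc{POS-CNF} there is a log-space reduction $f$ from $L$ to \textsc{POS-CNF}; composing with $g$ and invoking the closure of $\le_{\mathrm{L}}$ under composition yields a log-space reduction from $L$ to the \textsc{Game connected domination problem}. Together with the PSPACE-membership recorded in the proof of Theorem~\ref{thm:main}, this is exactly the assertion that the \textsc{Game connected domination problem} is log-complete in PSPACE.

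I do not anticipate a genuine obstacle: the only subtlety is the familiar one in the proof of transitivity of $\le_{\mathrm{L}}$ --- a log-space transducer cannot afford to store the intermediate string $f(x)$, so the composition must be realized by recomputing bits of $f(x)$ on demand rather than by naively concatenating the two machines --- and this is standard. Everything else is bookkeeping about the polynomial size and the purely local, gadget-wise structure of $G_\F$.
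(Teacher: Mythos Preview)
Your proposal is correct and follows exactly the paper's approach: the paper simply observes (in one sentence preceding the corollary) that the reduction of Theorem~\ref{thm:main} is computable in logarithmic working space and that \textsc{POS-CNF} is log-complete in PSPACE, whence the result. You have merely fleshed out the log-space computability claim with explicit bookkeeping, which the paper leaves implicit.
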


\section{Complexity of the Staller-start game}
\label{sec:S-game}

In this section we consider the Staller-start connected domination game. The main idea of the proof of the PSPACE-completeness of the \textsc{Staller-start game connected domination problem} is similar as in the Dominator-start game. However, the graph $H_{2n+7}$ in the construction is replaced with a smaller graph and connected to the rest of the graph in a slightly different way.

Given a formula $\F$ with $k$ variables and $n$ disjunctive clauses, we built a graph $G'_{\F}$ in the following way. For each variable $X_i$, $i \in [k]$, we add to the graph a copy $B_i$ of the graph $B$ (called a gadget $B_i$). For each clause $C_j$, $j \in [n]$, we add to the graph a copy $C(n)_j = C_j$ of the graph $C(n)$ (called a gadget $C_j$) and make all vertices $\{c_j, c^1_j, \ldots, c^n_j\}$ adjacent to $a_i$ if and only if the variable $X_i$ appears in the clause $C_j$. We add to the graph a disjoint copy of the graph $H_6$ and make $u_7$ adjacent to vertices $a_i$ and $b_i$ for all $i \in [k]$.  If $k$ is odd, we also add to the graph a disjoint copy of the graph $A$ and make $p_1$ adjacent to $u_7$. For example, see Figure \ref{fig:GF'}.

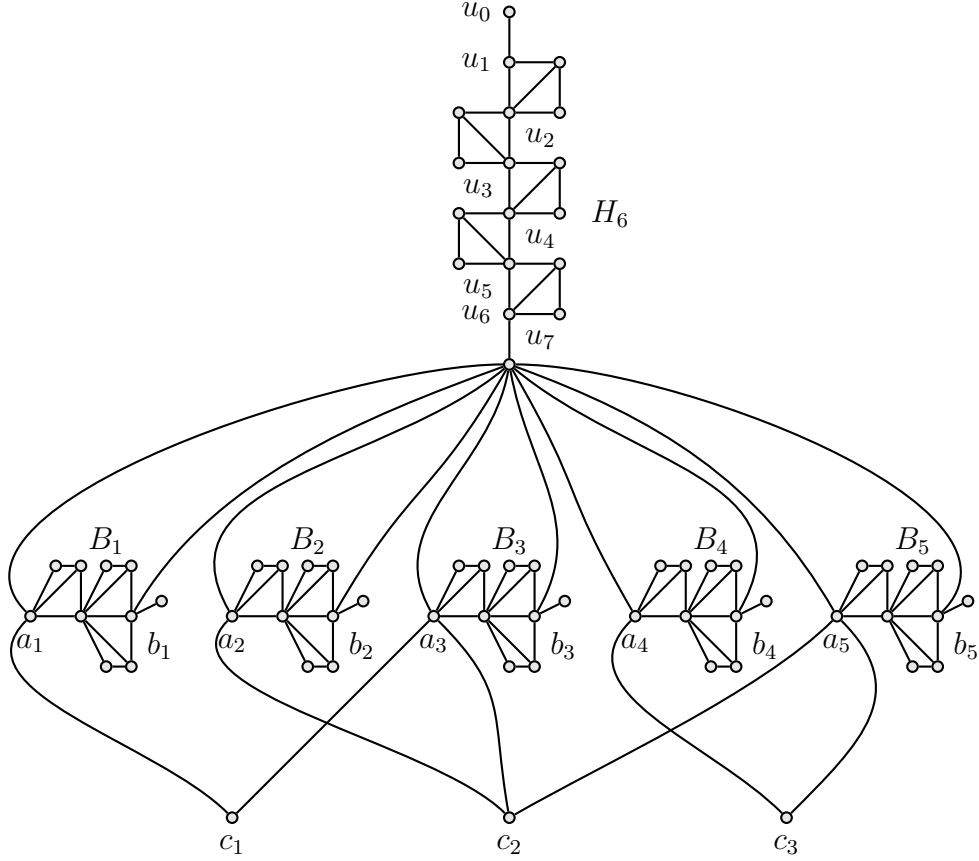
\begin{figure}[!ht]
	\begin{center}
		\begin{tikzpicture}[thick, scale = 0.77]
			\tikzstyle{vert}=[circle, draw, fill=black!10, inner sep=0pt, minimum width=4pt]
			
			\foreach \x in {1,2,3,4,5}{
				\node (k\x) at (4*\x - 4 +1.5,1.5) {$B_{\x}$};
				\node[vert, label=below: {$a_{\x}$}] (a\x) at (4*\x - 4 +0,0) {};
				\node[vert] (e\x) at (4*\x - 4+1,0) {};
				\node[vert, label=below right: {$b_{\x}$}] (b\x) at (4*\x - 4+2,0) {};
				\node[vert] (d\x) at (4*\x - 4+2.6,0.3) {};
				
				\node[vert] (h\x) at (4*\x - 4+1,1) {};
				\node[vert] (k1\x) at (4*\x - 4+0.5,1) {};
				
				\node[vert] (f1\x) at (4*\x - 4+2,1) {};
				\node[vert] (g1\x) at (4*\x - 4+1.5,1) {};
				
				\node[vert] (f2\x) at (4*\x - 4+2,-1) {};
				\node[vert] (g2\x) at (4*\x - 4+1.5,-1) {};

				\draw (a\x) -- (e\x) -- (b\x) -- (d\x);
				\draw (a\x) -- (h\x) -- (e\x) -- (f1\x) -- (b\x) -- (f2\x) -- (e\x) -- (g1\x) -- (f1\x);
				\draw (e\x) -- (g2\x) -- (f2\x);
				\draw (h\x) -- (k1\x) -- (a\x);	
			}
			
			\node (h) at (12.5,8.5) {$H_{6}$};
			\node[vert, label=5: {$u_7$}] (u0) at (9.5,5) {};
			\node[vert, label=180: {$u_6$}] (u1) at (9.5,7) {};
			\node[vert] (u2) at (10.5,7) {};
			\node[vert] (u3) at (11.5,7) {};
			\node[vert] (u4) at (12.5,7) {};
			\node[vert] (u5) at (13.5,7) {};
			\node[vert, label=-90: {$u_1$}] (u6) at (14.5,7) {};
			\node[vert, label=-90: {$u_0$}] (u7) at (15.5,7) {};
			
			\node[vert] (x1) at (9.5,8) {};
			\node[vert] (x2) at (10.5,6) {};
			\node[vert] (x3) at (11.5,8) {};
			\node[vert] (x4) at (12.5,6) {};
			\node[vert] (x5) at (13.5,8) {};
			
			\node[vert] (y1) at (10.5,8) {};
			\node[vert] (y2) at (11.5,6) {};
			\node[vert] (y3) at (12.5,8) {};
			\node[vert] (y4) at (13.5,6) {};
			\node[vert] (y5) at (14.5,8) {};
			
			\draw (u0) -- (u1) -- (u2) -- (u3) -- (u4) -- (u5) -- (u6) -- (u7);
			
			\draw (u1) -- (x1) -- (y1);
			\path (u1) edge (y1);
			\path (u2) edge (y1);
			
			\draw (u2) -- (x2) -- (y2);
			\path (u2) edge (y2);
			\path (u3) edge (y2);
			
			\draw (u3) -- (x3) -- (y3);
			\path (u3) edge (y3);
			\path (u4) edge (y3);
			
			\draw (u4) -- (x4) -- (y4);
			\path (u4) edge (y4);
			\path (u5) edge (y4);
			
			\draw (u5) -- (x5) -- (y5);
			\path (u5) edge (y5);
			\path (u6) edge (y5);
			
			\path (u6) edge (u7);
			
			\draw (u0) to[out=180,in=135,distance=3cm] (a1);
			\draw (u0) to[out=-160,in=60,distance=3cm] (b1);
			\draw (u0) to[out=-140,in=120,distance=3cm] (a2);
			\draw (u0) to[out=-120,in=60,distance=3cm] (b2);
			
			\draw (u0) to[out=-100,in=120,distance=2cm] (a3);
			\draw (u0) to[out=-80,in=60,distance=2cm] (b3);
			
			\draw (u0) to[out=-60,in=120,distance=3cm] (a4);
			\draw (u0) to[out=-40,in=60,distance=3cm] (b4);
			\draw (u0) to[out=-20,in=120,distance=3cm] (a5);
			\draw (u0) to[out=0,in=45,distance=3cm] (b5);
			
			\node (a) at (2.5,7) {$A$ as $k$ is odd};		
			\node[vert, label=45: {$p_1$}] (p1) at (6.5,7) {};
			\node[vert, label=-45: {$p_2$}] (p2) at (5.5,7) {};
			\node[vert, label=45: {$p_3$}] (p3) at (4.5,7) {};
			
			\node[vert, label=above: {$q_1$}] (q1) at (6.5,8) {};
			\node[vert, label=below: {$q_2$}] (q2) at (5.5,6) {};
			
			\node[vert, label=above: {$r_1$}] (r1) at (5.5,8) {};
			\node[vert, label=below: {$r_2$}] (r2) at (4.5,6) {};
			
			\draw (p1) -- (p2) -- (p3);
			
			\draw (p1) -- (q1) -- (r1);
			\path (p2) edge (q1);
			\path (p2) edge (r1);
			
			\draw (p2) -- (q2) -- (r2);
			\path (p3) edge (q2);
			\path (p3) edge (r2);
			
			\draw (u0) -- (p1);
			
			\node (cc1) at (2,-6.3) {$C_1$};
			\node[vert, label=above: {$c_1$}] (c1) at (0.5,-4) {};
			\node[vert, label=right: {$c^1_1$}] (c11) at (1.5,-4) {};
			\node[vert, label=right: {$c^2_1$}] (c21) at (2.5,-4) {};
			\node[vert, label=right: {$c^3_1$}] (c31) at (3.5,-4) {};
			
			\node[vert, label=below: {$d_1$}] (d1) at (0.5,-5) {};
			\node[vert, label=below: {$d^1_1$}] (d11) at (1.5,-5) {};
			\node[vert, label=below: {$d^2_1$}] (d21) at (2.5,-5) {};
			\node[vert, label=below: {$d^3_1$}] (d31) at (3.5,-5) {};
			
			\draw (c1) -- (d1);
			\draw (c11) -- (d11);
			\draw (c21) -- (d21);
			\draw (c31) -- (d31);
			
			\draw (c1) -- (d11);	
			\draw (c1) -- (d21);
			\draw (c1) -- (d31);
			
			\node (cc2) at (8.5,-6.3) {$C_2$};
			\node[vert, label=above: {$c_2$}] (c2) at (7,-4) {};
			\node[vert, label=right: {$c^1_2$}] (c12) at (8,-4) {};
			\node[vert, label=right: {$c^2_2$}] (c22) at (9,-4) {};
			\node[vert, label=right: {$c^3_2$}] (c32) at (10,-4) {};
			
			\node[vert, label=below: {$d_2$}] (d2) at (7,-5) {};
			\node[vert, label=below: {$d^1_2$}] (d12) at (8,-5) {};
			\node[vert, label=below: {$d^2_2$}] (d22) at (9,-5) {};
			\node[vert, label=below: {$d^3_2$}] (d32) at (10,-5) {};
			
			\draw (c2) -- (d2);
			\draw (c12) -- (d12);
			\draw (c22) -- (d22);
			\draw (c32) -- (d32);
			
			\draw (c2) -- (d12);	
			\draw (c2) -- (d22);
			\draw (c2) -- (d32);
			
			\node (cc3) at (15,-6.3) {$C_3$};
			\node[vert, label=above: {$c_3$}] (c3) at (13.5,-4) {};
			\node[vert, label=right: {$c^1_3$}] (c13) at (14.5,-4) {};
			\node[vert, label=right: {$c^2_3$}] (c23) at (15.5,-4) {};
			\node[vert, label=right: {$c^3_3$}] (c33) at (16.5,-4) {};
			
			\node[vert, label=below: {$d_3$}] (d3) at (13.5,-5) {};
			\node[vert, label=below: {$d^1_3$}] (d13) at (14.5,-5) {};
			\node[vert, label=below: {$d^2_3$}] (d23) at (15.5,-5) {};
			\node[vert, label=below: {$d^3_3$}] (d33) at (16.5,-5) {};
			
			\draw (c3) -- (d3);
			\draw (c13) -- (d13);
			\draw (c23) -- (d23);
			\draw (c33) -- (d33);
			
			\draw (c3) -- (d13);	
			\draw (c3) -- (d23);
			\draw (c3) -- (d33);
			
			\draw (c1) to[out=135,in=-135,distance=2cm] (a1);
			\draw (c11) to[out=135,in=-135,distance=2cm] (a1);
			\draw (c21) to[out=130,in=-135,distance=2cm] (a1);
			\draw (c31) to[out=130,in=-135,distance=2cm] (a1);
			
			\draw (c1) to[out=55,in=-135,distance=3cm] (a3);
			\draw (c11) to[out=60,in=-135,distance=3cm] (a3);
			\draw (c21) to[out=65,in=-135,distance=3cm] (a3);
			\draw (c31) to[out=70,in=-135,distance=3cm] (a3);
			
			\draw (c2) to[out=135,in=-145,distance=2.5cm] (a2);
			\draw (c12) to[out=135,in=-145,distance=2.5cm] (a2);
			\draw (c22) to[out=130,in=-145,distance=2.5cm] (a2);
			\draw (c32) to[out=130,in=-145,distance=2.5cm] (a2);
			
			\draw (c2) to[out=50,in=-45,distance=1.3cm] (a3);
			\draw (c12) to[out=60,in=-45,distance=1.3cm] (a3);
			\draw (c22) to[out=70,in=-45,distance=1.3cm] (a3);
			\draw (c32) to[out=80,in=-45,distance=1.3cm] (a3);
			
			\draw (c2) to[out=45,in=-135,distance=4cm] (a5);
			\draw (c12) to[out=45,in=-135,distance=4cm] (a5);
			\draw (c22) to[out=45,in=-135,distance=4cm] (a5);
			\draw (c32) to[out=45,in=-135,distance=4cm] (a5);
			
			\draw (c3) to[out=135,in=-135,distance=2.5cm] (a4);
			\draw (c13) to[out=135,in=-135,distance=2.5cm] (a4);
			\draw (c23) to[out=130,in=-135,distance=2.5cm] (a4);
			\draw (c33) to[out=130,in=-135,distance=2.5cm] (a4);
			
			\draw (c3) to[out=45,in=-45,distance=2cm] (a5);
			\draw (c13) to[out=55,in=-45,distance=2cm] (a5);
			\draw (c23) to[out=65,in=-45,distance=2cm] (a5);
			\draw (c33) to[out=75,in=-45,distance=2cm] (a5);
		\end{tikzpicture}
		\caption{The graph $G'_\F$ obtained from the formula $\F = (X_1 \vee X_3) \wedge (X_2 \vee X_3 \vee X_5) \wedge (X_4 \vee X_5)$.}
		\label{fig:GF'}
	\end{center}
\end{figure}

\begin{lemma}
	\label{lem:p1-wins'}
	If Player 1 has a winning strategy for the POS-CNF game played on $\F$, then $$\gcg'(G'_\F) \leq \begin{cases}
		3k + 2n + 13; & k \text{ even},\\
		3k + 2n + 16; & k \text{ odd}.
	\end{cases}$$
\end{lemma}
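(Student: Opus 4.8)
The plan is to equip Dominator with a strategy for the Staller-start game on $G'_\F$ whose length never exceeds the stated bound, following the blueprint of Lemma~\ref{lem:p1-wins} with three modifications. First, Staller now opens the game, so Dominator cannot fix the \fast\ resolution of the $H$-gadget by fiat and must instead react. Second, the $H$-gadget is $H_6$ instead of $H_{2n+7}$: since Staller may run the \slow\ strategy there, Dominator can no longer push $H_6$ down to $\gc(H_6)=6$, and the most he can promise is the $\gcg'(H_6)=12$ moves of Lemma~\ref{lem:Hn} inside $H_6$ together with the single move $u_7$ forced into the dominating set by connectivity, which is exactly the constant $13$ appearing in the bound. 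Third, the parity bookkeeping for the gadget $A$ (present only when $k$ is odd) has to be redone for the Staller-start parity.

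For the $H_6$ part I would let Dominator run an imagination strategy: he imagines a standalone Staller-start connected domination game on $H_6$ in which he plays optimally, so that game lasts at most $\gcg'(H_6)=12$ moves by Lemma~\ref{lem:Hn}; he copies each of Staller's $H_6$-moves into the imagined game and copies his imagined-optimal replies back to $G'_\F$, while whenever Staller plays outside $H_6$ he answers outside according to the POS-CNF imagination below and leaves the imagined $H_6$-game paused (and still balanced, with Staller to move). The boundary vertex $u_7$ needs care, since in $G'_\F$ it is a legal move that newly dominates the $a_i,b_i$ and, when $k$ is odd, $p_1$, so it behaves differently than in the standalone game; I would check that whoever plays $u_7$, and however Staller uses the edges joining $u_7$ to the $a_i,b_i$, the imagined and real positions stay consistent on $H_6$, so that Staller never gets more than $12$ moves strictly inside $H_6$ and the whole $H_6$-part costs at most $12+1=13$. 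On the rest of the graph Dominator copies the strategy of Lemma~\ref{lem:p1-wins} essentially verbatim: he runs the POS-CNF imagination in which Player~1 plays his winning strategy on $\F$, replies to Staller's moves on the $B_i$'s and $C_j$'s by the rules (1)--(6) of that proof so that on average at most three moves land on each $B_i$ and at most two on each $C_j$, and arranges that by the end of the imagined POS-CNF game all vertices $c_1,\ldots,c_n,\ldots,c_1^n,\ldots,c_n^n$ are dominated, so that no $C_j$ ever needs a third genuine move.

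Adding up: at most $12$ moves inside $H_6$, one more for $u_7$, at most $3k$ on the $B_i$'s and at most $2n$ on the $C_j$'s on average, giving at most $3k+2n+13$ when $k$ is even. When $k$ is odd the gadget $A$ is present; if Staller is the first to touch $A$ then, exactly as in rule~(1) of Lemma~\ref{lem:p1-wins}, three moves are spent on $A$, and the only delicate case is when Dominator is forced to open $A$ by playing $p_1$, which can cost as many as four moves. Here I would run the parity count with Staller as the starting player, so odd-numbered moves are Staller's: if all of $H_6$, the $B_i$'s and the $C_j$'s had already been exhausted, i.e.\ at most $3k+2n+13$ moves had been played, then the move opening $A$ would be number at least $3k+2n+14$, which is odd for odd $k$ and hence Staller's, a contradiction; so at most $3k+2n+12$ moves were spent elsewhere, and adding the at most four moves on $A$ still leaves at most $3k+2n+16$.

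The step I expect to be the main obstacle is the $H_6$-imagination, specifically the clean claim that Dominator can guarantee at most $12$ moves strictly inside $H_6$ plus the one connectivity move $u_7$, no matter how turns interleave with the rest of $G'_\F$. This replaces the one-line fact ``\fast\ on $H_{2n+7}$ costs exactly $2n+8$'' used in the Dominator-start proof, but here it genuinely requires the $\gcg'$ bound of Lemma~\ref{lem:Hn} together with a careful synchronization of the imagined and real games across the vertex $u_7$, including the awkward possibilities that Staller plays $u_7$ early or makes a move inside $H_6$ that is legal in $G'_\F$ only thanks to the extra $u_7$-edges.
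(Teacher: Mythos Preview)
Your overall architecture is sensible, but the $H_6$-imagination has a genuine gap precisely where you flag it, and the paper resolves it differently. The pairing rule ``Staller plays outside $H_6$ $\Rightarrow$ Dominator replies outside following the POS-CNF rules of Lemma~\ref{lem:p1-wins}'' does not work before $u_7$ has been played: those rules presuppose that the analogue of $u_0$ is already in the played set, since that is what makes the vertices $a_i$, $b_i$, $c_j$ reachable under the connectivity constraint. Concretely, if Staller opens on $c_1^1$, Dominator's only legal replies are $d_1^1$ or some $a_i$ with $X_i\in C_1$; rule~(3) would have him play $c_1$, but $c_1$ is not adjacent to $c_1^1$ and hence not legal. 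More generally, until $u_7$ is played the game is confined to a small neighbourhood of Staller's first move, and there is no coherent way to run either imagination there; meanwhile your imagined $H_6$-game sits paused with nobody having moved, so it gives no control over who eventually plays $u_7$ or how many moves are wasted reaching it.

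The paper does not use an $H_6$-imagination at all. Instead it splits on $s_1'$. If $s_1'\in V(H_6)$, Dominator simply plays vertices from $\{u_1,\ldots,u_7\}$ until $u_7$ is played (no imagined optimal needed); this costs at most $13$ moves on $H_6$ and at most one stray move outside. If $s_1'\notin V(H_6)$, Dominator first heads for $u_7$ along a shortest path---every such vertex is within distance~$3$ of $u_7$ (with $r_2$ checked separately), so at most $6$ moves are spent outside $H_6$ before $u_7$ is reached---and only then runs \fast\ on $H_6$ from the $u_7$ end, paying exactly $7$ moves there. In both cases the combined cost ``$H_6$ plus pre-Phase-2 strays'' is at most $13$, and the POS-CNF phase is invoked only after $u_7$ is played, when Lemma~\ref{lem:p1-wins}'s rules apply verbatim. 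The trade-off ``extra stray moves outside compensated by \fast\ inside'' is the idea your plan is missing. As a side effect, the paper's case analysis also arranges that Phase~2 begins with Dominator to move, so it can reuse the parity argument for $A$ from Lemma~\ref{lem:p1-wins} unchanged rather than redoing it; your separate S-game parity recount would need the exact number of pre-$A$ moves, which your approach does not pin down.
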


\begin{proof}
	Dominator's strategy consists of two phases. Phase 1 is the first phase of the game and Dominator's goal is to reach Phase 2 with as few moves as possible. Phase 2 starts with the first move of Dominator after the vertex $u_7$ has been played (so either with the move $d_i'$ if $s_i' = u_7$ or with the move $d_{i+1}'$ if $d_i' = u_7$). In Phase 2, Dominator follows the same strategy as in Lemma~\ref{lem:p1-wins} (after Staller played $u_0$ on $G_\F$). If during this strategy, if some vertex is not a legal move (because it was played before), Dominator considers the imagined POS-CNF game as if the move was played in the current step, but plays an arbitrary legal move in $G'_\F$, again following the rules for any legal move from the proof of Lemma~\ref{lem:p1-wins}. By the Connected Game Continuation Principle from~\cite{bujtas+2021connected} this is not a loss for Dominator. However, we may need to take these additional moves into account at the final count of moves. For this sake, let $$M = \begin{cases}
	 \{ s_1', d_1', \ldots, s_{i-1}', d_{i-1}' \} \cap \left( V(G'_\F) \setminus V(H_6) \right); & s_i' = u_7,\\
		\left( \{ s_1', d_1', \ldots, s_{i-1}', d_{i-1}' \} \cup \{ s_i', s_{i+1}' \} \right) \cap \left( V(G'_\F) \setminus V(H_6) \right); & d_i' = u_7,\\
	\end{cases}$$
	denote the set of possibly additional moves in $V(G'_{\F}) \setminus V(H_6)$ made during the game.
	If $s_1' \in V(H_6)$, then $|M| \leq 1$. If $s_1' \in V(G'_{\F}) \setminus V(H_6 \cup A)$, then since every vertex $x \in  V(G'_{\F}) \setminus V(H_6 \cup A)$ is at distance at most 3 from $u_7$, Dominator can ensure that $|M| \leq 6$ by playing the vertices on the shortest path between $s_1'$ and $u_7$. If $s_1' \in V(A)$, then it is not hard to see that $|M| \leq 6$ as well (only the vertex $r_2$ is at distance 4 from $u_7$ and has to be considered separately).
	
	To finalize the proof, consider the following cases. Let $$f(n,k) = \begin{cases}
		3k + 2n; & k \text{ even},\\
		3k + 2n + 3; & k \text{ odd}.
	\end{cases}$$ Observe that this equals the upper bound given in Lemma \ref{lem:p1-wins} without the $2n+8$ moves made on $H_{2n+7}$, so the moves played in the game after $u_0$ is played by Staller. 
	\begin{description}
		\item[Case 1.] $s_1' = u_0$.\\
		Dominator's strategy is to play only vertices from $\{u_1, \ldots, u_7\}$, thus at most 13 moves are played on $H_6$. If indeed 13 moves are played on $H_6$, then Staller plays $u_7$ and $M = \emptyset$. However, if Dominator is forced to play $u_7$, then at most 12 moves were made on $H_6$ and $|M| \leq 1$. In Phase 2, Dominator can ensure at most $f(n,k)$ moves are played on $V(G'_\F) \setminus V(H_6)$ since Player 1 has a winning strategy in the POS-CNF game on $\F$, using the same strategy as in the proof of Lemma \ref{lem:p1-wins}. Altogether, either at most $13 + f(n,k)$ moves are played, or at most $12 + f(n,k) + |M| \leq 13 + f(n,k)$ moves are made in the game.
		
		\item[Case 2.] $s_1' \in V(H_6) \setminus \{u_0\}$.\\
		Dominator's strategy is to play only vertices from $\{u_1, \ldots, u_7\}$, thus at most 12 moves are played on $H_6$. Combining Dominator's strategy from Phase 2 and the fact that $|M| \leq 1$, we conclude that at most $13 + f(n,k)$ moves are played in the game.
		
		\item[Case 3.] $s_1' \in V(G'_\F) \setminus V(H_6)$.\\
		Dominator's strategy is to ensure that $u_7$ is played as soon as possible. Afterwards, he utilizes the strategy \textsc{Fast} on $H_6$. More precisely, whenever Staller plays a vertex on $H_6$, Dominator replies by playing a vertex on $H_6$ as well. Since the remaining moves on $H_6$ can be paired ($(u_6, u_5), (u_4, u_3), (u_2, u_1)$) the development of the game on $H_6$ is independent of Dominator's strategy on $V(G'_\F) \setminus V(H_6)$, and exactly seven moves are played on $H_6$. Combining this with Dominator's strategy from Phase 2 and the fact that $|M| \leq 6$ yields that the number of moves in the game is at most $13 + f(n,k)$.
	\end{description}
	This concludes the proof of Lemma~\ref{lem:p1-wins'}.
\end{proof}

We remark that a more detailed analysis of the game might yield that the graph $H_6$ in $G'_\F$ could be replaced with some $H_i$, $i \leq 5$, but feel that a simpler proof better illustrates the general idea.

\begin{lemma}
	\label{lem:p2-wins'}
	If Player 2 has a winning strategy for the POS-CNF game played on $\F$, then $$\gcg'(G'_\F) \geq \begin{cases}
		3k + 2n + 14; & k \text{ even},\\
		3k + 2n + 17; & k \text{ odd}.
	\end{cases}$$
\end{lemma}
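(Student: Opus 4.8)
The plan is to run the strategy of Lemma~\ref{lem:p2-wins} essentially verbatim, the only genuine difference being that here Staller moves first and can therefore \emph{force} the ``slow'' behaviour on $H_6$ at the opening; once that phase is over, the position is identical to the one that arises in Lemma~\ref{lem:p2-wins} immediately after $u_0$ has been played.

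First I would let Staller open with $s_1' = u_0$. Since $u_0$ has a unique neighbour $u_1$ in $G'_\F$, the reply $d_1' = u_1$ is forced. I would then check, one turn at a time, that if Staller continues with $s_2' = x_1,\, s_3' = x_2,\, \dots,\, s_6' = x_5$, then at every one of his intermediate turns Dominator has a \emph{unique} legal move, namely the next spine vertex $u_i$: each $x_i$ and $y_i$ stops being a legal move as soon as the adjacent spine vertices are on the board (it then dominates nothing new), and no vertex outside $H_6$ is reachable until $u_7$ is played. This produces the forced prefix $u_0,u_1,x_1,u_2,x_2,u_3,x_3,u_4,x_4,u_5,x_5,u_6$, after which $u_7$ is the only legal move, played by Staller as move $13$. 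In particular Dominator has no legal deviation during this phase, so nothing extra needs to be analysed here. Hence exactly $13$ moves fall on $H_6$, the join vertex $u_7$ is played last in this phase and by Staller, and it is Dominator's turn next.

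Now the board reads: all of $V(H_6)$ is played, everything else is undominated, $u_7$ --- which is adjacent to every $a_i$, every $b_i$, and, when $k$ is odd, to $p_1$ --- has just been played by Staller, and it is Dominator's move. This is exactly the position, restricted to the gadgets and to $A$, that the proof of Lemma~\ref{lem:p2-wins} reaches immediately after Staller plays $u_0$ there. I would therefore have Staller follow the gadget strategy of that proof verbatim: the general rules (A)--(C), the Phase~1 rules (1)--(3), and the Phase~2 rules, imagining a POS-CNF game on $\F$ in which Player~2 plays the assumed winning strategy and obeying the same ``any legal move'' restrictions (never play $c_1,\dots,c_n$ or $a_1,\dots,a_k$ unless forced). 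The analysis there shows Staller forces, on average, at least three moves on each $B_i$, at least two moves on each $C_j$, at least three moves on $A$ when $k$ is odd, and one further move because the game always enters Phase~2. Adding the $13$ moves on $H_6$ gives at least $13 + 3k + 2n + 1 = 3k+2n+14$ moves when $k$ is even and at least $13 + 3k + 2n + 4 = 3k+2n+17$ moves when $k$ is odd.

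The delicate point --- and the main thing to get right --- is that the parity arguments baked into the transplanted strategy (most visibly in Phase~1 rule (3), where it is asserted that it is, or is not, Staller's turn depending on the parity of $k$) still give the same conclusions. Here the $H$-phase contributes $13$ moves (odd) instead of the $2n+8$ moves (even) of Lemma~\ref{lem:p2-wins}, but at the same time it is Staller, not Dominator, who moves first; these two parity reversals cancel, so ``whose turn it is'', viewed as a function of the number of moves already made in the gadgets and $A$, is exactly as in Lemma~\ref{lem:p2-wins}, and every parity-based step transfers unchanged. I expect this bookkeeping, together with the (routine) verification that the ``any legal move'' fallback inside $G'_\F$ still respects the stated restrictions, to be essentially all that is new; the rest is a direct quotation of the Dominator-start analysis.
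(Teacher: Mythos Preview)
Your proposal is correct and follows essentially the same route as the paper: Staller opens with $u_0$, plays \textsc{Slow} on $H_6$ to force exactly $13$ moves ending with $s_7'=u_7$, and then transplants the gadget strategy of Lemma~\ref{lem:p2-wins} verbatim to force at least $f(n,k)+1$ further moves. Your explicit verification that every Dominator move on $H_6$ is forced and your parity-cancellation remark (the odd $13$ compensated by the switch from D-start to S-start) are more detailed than the paper's one-line appeal to \textsc{Slow} and Lemma~\ref{lem:p2-wins}, but the argument is the same.
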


\begin{proof}
	It suffices to provide a strategy for Staller that ensures that at least $14 + f(n,k)$ moves are played on $G'_\F$ where $$f(n,k) = \begin{cases}
		3k + 2n; & k \text{ even},\\
		3k + 2n + 3; & k \text{ odd}.
	\end{cases}$$ Staller's strategy is to start the game by playing $u_0$ and using her strategy \textsc{Slow} on $H_6$. This ensures that 13 moves are played on $H_6$ and that Staller plays $u_7$. Now, using the same argument as in the proof of Lemma~\ref{lem:p2-wins} (after Staller played $u_0$) we can see that since Player 2 wins the POS-CNF game on $\F$, Staller can force at least $f(n,k)+1$ moves on $V(G'_\F) \setminus V(H_6)$. Thus the game lasts at least $14 + f(n,k)$ moves.
\end{proof}

Using analogous arguments as for the Dominator-start game, we obtain the following.

\begin{theorem}
	\label{thm:main'}
	The \textsc{Staller-start game connected domination problem} is PSPACE-complete. Moreover, the problem is log-complete in PSPACE.
\end{theorem}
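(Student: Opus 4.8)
The plan is to run the same packaging argument that yields Theorem~\ref{thm:main} and Corollary~\ref{cor:main} in the Dominator-start case, now with Lemmas~\ref{lem:p1-wins'} and~\ref{lem:p2-wins'} playing the roles of Lemmas~\ref{lem:p1-wins} and~\ref{lem:p2-wins}. There are three steps: membership in PSPACE, correctness of the reduction from the \textsc{POS-CNF problem}, and the (low) complexity of the reduction itself.

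First I would check that the \textsc{Staller-start game connected domination problem} lies in PSPACE. The connected domination game on $G'_\F$ terminates after at most $|V(G'_\F)|$ moves, so the obvious alternating search over the game tree has polynomial depth and at each node needs to remember only the set of already played vertices; this is a polynomial-space computation. (Alternatively, one may invoke NP $\subseteq$ PSPACE, as in the proof of Theorem~\ref{thm:main}.)

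Next I would set up the reduction. Given a positive CNF formula $\F$ with $k$ variables and $n$ clauses, construct $G'_\F$ exactly as in Section~\ref{sec:S-game}, and set the target value
\[ m \;=\; \begin{cases} 3k+2n+13, & k \text{ even},\\ 3k+2n+16, & k \text{ odd}. \end{cases} \]
Lemma~\ref{lem:p1-wins'} gives $\gcg'(G'_\F)\le m$ whenever Player~1 wins the POS-CNF game on $\F$, while Lemma~\ref{lem:p2-wins'} gives $\gcg'(G'_\F)\ge m+1$ whenever Player~2 wins it. Since exactly one of the two players has a winning strategy in this finite game of perfect information, these two implications are in fact equivalences: $\gcg'(G'_\F)\le m$ if and only if Player~1 wins on $\F$. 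Hence $\F \mapsto (G'_\F, m)$ is a correct reduction from the \textsc{POS-CNF problem} to the \textsc{Staller-start game connected domination problem}.

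Finally I would bound the resources needed to produce $(G'_\F, m)$. The graph $G'_\F$ is obtained by taking $k$ copies of the fixed gadget $B$, $n$ copies of $C(n)$, one copy of $H_6$, possibly one copy of $A$, and adding the prescribed connecting edges (each determined by which variable occurs in which clause and by the parity of $k$); enumerating these vertices and edges, and computing $m$, requires only counters of logarithmic size. Therefore the reduction is computable in logarithmic working space, and since the \textsc{POS-CNF problem} is log-complete in PSPACE~\cite{pos-cnf}, so is the \textsc{Staller-start game connected domination problem}; in particular it is PSPACE-complete. The substantive work is entirely inside Lemmas~\ref{lem:p1-wins'} and~\ref{lem:p2-wins'}, so the only point needing care at this stage is that the two possible ranges of $\gcg'(G'_\F)$ for a fixed parity of $k$ are disjoint — and indeed they differ by exactly one, so the single threshold $m$ cleanly separates the ``Player~1 wins'' instances from the ``Player~2 wins'' ones; everything else is a transcription of the D-game argument.
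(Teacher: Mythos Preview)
Your proposal is correct and follows exactly the approach the paper intends: the paper's own proof is the single sentence ``Using analogous arguments as for the Dominator-start game, we obtain the following,'' and you have simply spelled out those analogous arguments (membership in PSPACE, the threshold $m$ separating the cases via Lemmas~\ref{lem:p1-wins'} and~\ref{lem:p2-wins'}, and the log-space computability of $\F\mapsto(G'_\F,m)$).
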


\section*{Acknowledgments}
I would like to thank B.~Brešar, Cs.~Bujtás, S.~Cabello, S.~Klavžar and M.~Konvalinka for pointing out the problem studied in this paper, and acknowledge the financial support from the Slovenian Research and Innovation Agency (ARIS) under the grants Z1-50003, P1-0297, N1-0218, N1-0285 and N1-0355, and from the European Union (ERC, KARST, 101071836). I would also like to thank the anonymous reviewers for their close reading and helpful suggestions.

\printbibliography

\end{document}